\documentclass[12pt]{amsart}
\usepackage{amsmath, amsthm, amssymb, amsfonts, enumerate}
\usepackage[colorlinks=true,linkcolor=blue,urlcolor=blue]{hyperref}
\IfFileExists{dsfont.sty}{\usepackage{dsfont}}{}
\usepackage{color}
\usepackage{geometry}
\IfFileExists{todonotes.sty}{\usepackage{todonotes}}{}
\usepackage{graphicx}
\usepackage{caption}
\usepackage{float}
\usepackage{mathtools}
\usepackage{mathrsfs}
\IfFileExists{soul.sty}{\usepackage{soul}}{}
\mathtoolsset{showonlyrefs}
\numberwithin{equation}{section}
\def \P{\mathsf P}
\def \E{\mathsf E}
\def \R{\mathbb R}
\def \ud{\mathrm{d}}
\def \e{\mathrm{e}}
\newcommand{\ind}{\mathbf{1}}
\newtheorem{theorem}{Theorem}[section]
\newtheorem{lemma}{Lemma}[section]
\newtheorem{corollary}{Corollary}[section]
\newtheorem{proposition}{Proposition}[section]
\newtheorem{example}{Example}[section]
\newtheorem{remark}{Remark}[section]
\DeclareMathOperator{\Var}{Var}
\DeclareMathOperator{\Cov}{Cov}

\newcommand{\cx}{\mathrm{cx}}

\title{Convex order and preservation of convexity for Bayesian posterior updates}
\author[Bayraktar]{Erhan Bayraktar}
\thanks{E. Bayraktar acknowledges support from the National Science Foundation under Grant No.~DMS-2602036 and from the Susan M. Smith Professorship.}
\author[Wang]{Yuqiong Wang}
\address{Erhan Bayraktar: Department of Mathematics, University of Michigan, Ann Arbor, MI 48109, USA}
\email{erhan@umich.edu}
\address{Yuqiong Wang: Department of Mathematics, University of Michigan, Ann Arbor, MI 48109, USA}
\email{yuqw@umich.edu}
\date{\today}

\subjclass[2020]{Primary 60E15, 62L10; Secondary 62F15, 62C10}
\keywords{Convex order, posterior transition laws, exponential families, sequential analysis, preservation of convexity}
\begin{document}

\begin{abstract}
We study how the response of a Bayesian posterior statistic to future observations changes as information accumulates. For a non-decreasing function $T$, define $\Pi_n^T=\E[T(\Theta)\vert \mathcal F_n]$, where $\Theta$ has an arbitrary prior and the observations come from a one-parameter exponential family. Conditioning on the same current value of $\Pi^T$, we show that the posterior statistic after additional observations is larger in convex order when the current posterior is based on fewer observations. We also prove preservation of convexity: the expected value of a convex function of the future posterior statistic is convex in the current posterior statistic. Together, these two properties provide structural tools for establishing time-monotonicity results in dynamic Bayesian decision and optimal stopping problems. If the exponential family contains an infinitely divisible distribution, the results extend to a continuous-time observation model through a family of L\'evy processes.
\end{abstract}

\maketitle

\section{Introduction}\label{sec_intro}
Let $X_1,X_2,\ldots$ be conditionally independent observations whose distribution depends on an unknown parameter $\Theta$. We assume that their conditional distributions belong to a one-parameter exponential family and that $\Theta$ has prior distribution $\mu$. For a non-decreasing function $T$, define
\[\Pi_n^T:=\E[T(\Theta)\vert\mathcal F_n],\]
where $\mathcal F_n:=\sigma(X_1,\ldots,X_n)$. Different choices of $T$ correspond to different applications and posterior statistics. The choice $T(u)=u$ gives the posterior mean, while $T(u)=\ind_{\{u>\theta_0\}}$ gives the posterior probability of the composite hypothesis $\Theta>\theta_0$. Another example is $T(u)=B'(u)$, for which $\Pi_n^T$ is the posterior predictive mean of the next observation.

Additional observations make posterior statistics more dispersed in convex order. Indeed, the tower property gives
\[\Pi_m^T\leq_{\cx}\Pi_n^T,\quad m<n.\]
We ask a different question: how does the future distribution of $\Pi^T$ depend on the amount of information already accumulated? Suppose that two observers report the same current value $z$ of $\Pi^T$, but one has made $n$ observations and the other has made $m<n$ observations. Since $\Pi^T$ is a martingale, both future posterior statistics have mean $z$. Which observer faces the more dispersed posterior update after the same block of additional observations?

We show that it is the observer with the shorter history. Writing $K_{n,k}^T(z,\cdot)$ for the $k$-step posterior transition law from state $z$ at time $n$, and $\mathcal K_{n,k}^Tf(z):=\int f(w)K_{n,k}^T(z,\ud w)$ for its operator, we prove that, for $m<n$ and every fixed $k\geq 1$,
\begin{equation}\label{main}
K_{m,k}^T(z,\cdot)\geq_{\cx}K_{n,k}^T(z,\cdot).
\end{equation}
The kernel is defined on the admissible posterior-state domain and therefore remains meaningful when $\{\Pi_n^T=z\}$ has probability zero. Thus, at the same value of the posterior statistic, the earlier transition law is a mean-preserving spread of the later one.

The result is related to the comparison of statistical experiments and the value of information. Blackwell~\cite{BL} compares experiments under a fixed prior through the dispersion of the posterior beliefs they generate. Our setting is different: the same future experiment is applied to two current posteriors generated by histories of different lengths. DeGroot~\cite{DG} measures information through the expected reduction from prior to posterior uncertainty. We instead compare the value of the same future experiment at different information times, conditional on a matched scalar posterior state.

Bikhchandani and Mamer~\cite{BM} study the decreasing marginal value of conditionally i.i.d.\ signals under quadratic loss. They establish an ex ante result for several conjugate exponential-family models. For their ex post comparison, they present a normal--normal model---the only example of this property they report finding---in which the value of an additional signal decreases after every realized signal.

Our result is related, but the comparison is different. To make the distinction precise, let $T(\theta)=\theta$, let $v(z)=z^2$, and define
\[I_t(z):=\mathcal K_{t,1}^Tv(z)-v(z).\]
Bikhchandani and Mamer compare $I_{n-1}(z)$ with $I_n(z')$, where $z'$ is the posterior state reached after the next signal. Their comparison therefore follows the realized evolution of the posterior state. We instead compare $I_{n-1}(z)$ with $I_n(z)$, holding the posterior state fixed while varying the amount of information already accumulated. Under this matched-state comparison, our result applies to one-parameter exponential-family observation models and arbitrary priors satisfying our assumptions.

Related monotonicity results also appear in Bayesian bandit models. For conjugate models in exponential families, holding the prior mean reward fixed, Yu~\cite{YY} shows that the optimal bandit value decreases with the prior sample size. More generally, stochastic comparison arguments often combine an ordering of transition kernels with preservation of a class of test functions, such as convex functions; see \cite{BR,RL}. In our setting, both the ordering of posterior kernels and preservation of convexity are proved directly from the Bayesian posterior structure.

Our initial motivation comes from the composite testing problem of Ekstr\"om and Wang~\cite{EW}, where the posterior process is $\Pi_n=\P(\Theta>\theta_0\vert\mathcal F_n)$. They study the sequential problem of testing whether the unknown parameter exceeds a given threshold. Their value function is
\[V(n,\pi)=\inf_{\tau\in\mathcal T}\E_{n,\pi}\left[\Pi_{n+\tau}\wedge(1-\Pi_{n+\tau})+c\tau\right].\]
They compare posterior distributions at different information times while holding the posterior probability $\pi$ fixed, and show that the posterior distribution of $\Theta$ becomes more concentrated around the threshold $\theta_0$. This suggests the dynamic posterior convex-order comparison. To prove time monotonicity of the value function, however, one must compare the law of the next posterior probability, not only the current posterior. They therefore impose the one-step version of \eqref{main} as Assumption~5.1 and show in their Theorem~5.2 that it implies monotonicity of the value function and hence of the stopping boundaries. They verify this time monotonicity for a large class of model specifications and formulate the general exponential-family case as a conjecture. Our first main result, Theorem~\ref{thm1}, establishes this convex-order comparison and extends it from posterior probabilities to arbitrary non-decreasing posterior functionals $T$, and from one step to every fixed horizon $k$.

A related application is the quickest-search problem of Bayraktar and Kravitz~\cite{BKsearch}, in which each Brownian channel has a fixed binary type and the posterior probability is a martingale. In their binary model, the posterior probability determines the entire posterior distribution, so the within-channel transition kernel is time-homogeneous. With a composite prior over channel quality, accumulated information becomes an additional state variable. The results of this paper describe how the within-channel continuation kernel varies with this state: it decreases in convex order with accumulated information and preserves convexity. They therefore provide the structural ingredients for extending quickest-search models to composite channel types and more general exponential-family observations.

The distinction between the current and future posteriors is important. A static comparison concerns the current conditional law of $\Theta$. Our comparison concerns the random posterior state obtained after additional observations, which is the object that appears in dynamic decision problems through continuation values of the form $\mathcal K_{n,k}^Tf(z)$. Neither comparison implies the other. Sequential testing gives a simple illustration: along a fixed $z$-level curve, $\ind_{\{\Theta>\theta_0\}}$ has the same Bernoulli$(z)$ distribution at every time, while the future posterior probabilities have ordered transition laws.

For dynamic decision problems, however, the convex-order comparison alone is not enough. To iterate the Bellman equation, the value function must remain convex or concave after posterior updating. Our second main result, Theorem~\ref{thm_convex}, establishes precisely this preservation property: if the payoff is convex as a function of the posterior state, then its expected value after future observations is again convex in the current posterior state. This closure property allows the convex-order comparison to be reapplied at each backward step of the dynamic program and yields time-monotonicity results for stopping problems. Related preservation results for Markov and parabolic operators include \cite{JT2,JT,ET,ET2}; for a discrete-time example, see \cite{EW}. Our result establishes preservation directly for posterior transition kernels in one-parameter exponential families.

Our convexity-preservation result differs from classical convexity results for partially observed Markov decision processes (POMDPs), such as \cite{SmSo}. Those results establish convexity on the full belief space, whereas our convexity is with respect to the scalar posterior statistic $z$ and does not follow directly from belief-space convexity. Our proof also does not use the total-positivity methods of \cite{K_TP}.

Instead, both main results come from the same lemma applied to two different perturbations of the posterior family. Along a fixed-$z$ level curve, the time derivative of the posterior is the signed measure $G_t\mu_t$; at fixed time, the second derivative with respect to the state variable is $Q_z\mu_{t,z}$. The function $G_t$ is concave, whereas $Q_z$ is convex. These perturbations satisfy orthogonality relations of the same form, but for slightly different reasons. Normalization gives the first relation in each pair. Along the time level curve, $\int_ST \ud\mu_t=z$ is constant, while along the state parametrization, $\int_ST \ud\mu_{t,z}=z$ is affine in $z$. Differentiating once in the first case and twice in the second therefore gives
\[\int_SG_t \ud\mu_t=\int_STG_t\ud\mu_t=0,\quad\int_SQ_z\ud\mu_{t,z}=\int_STQ_z \ud\mu_{t,z}=0.
\]
The other ingredient has the same form in both proofs. Freeze the upper event $A$ selected by a call payoff and set
\[q(u):=\P_u(Y\in A)\]
for the future experiment $Y$. The monotone-likelihood-ratio property makes $q$ non-decreasing. For the threshold $a$ associated with $A$, Lemma~\ref{lem_frozen} then gives
\[\int_S(T(u)-a)q(u)G_t(u)\mu_t(\ud u)\leq0,\quad\int_S(T(u)-a)q(u)Q_z(u)\mu_{t,z}(\ud u)\geq0.\]
The signs are opposite precisely because $G_t$ is concave while $Q_z$ is convex. The first sign makes the frozen-event functional non-increasing in information time; the resulting envelope comparison yields the convex-order theorem. The second gives non-negative curvature of the frozen supporting functional; the touching argument then yields preservation of convexity. Thus the two results are distinct, but their signs come from the same covariance mechanism.

Our analysis covers a large family of problems and allows general prior distributions subject to mild conditions. In particular, we impose no conjugacy assumption. The main results are developed for discrete-time exponential-family observations by means of a continuous interpolation of the posterior family. When one member of the observation family is infinitely divisible, this interpolation is the actual posterior generated by an Esscher family of L\'evy processes. The convex-order comparison and preservation of convexity then hold in continuous time. This setting includes Brownian observations with an unknown drift, Poisson observations with an unknown intensity, and many finite- and infinite-activity jump models. In the Brownian case, the posterior probability process in \cite{EV} and the posterior mean process in \cite{EKV} have volatility coefficients that are non-increasing in time along fixed state levels.

The rest of the paper is organized as follows. Section~\ref{sec_cov} develops the posterior interpolation and the covariance inequalities underlying the main results. In Section~\ref{sec_main}, we prove the dynamic convex-order comparison and preservation of convexity and discuss their consequences. Section~\ref{sec_eg} presents the extension to L\'evy processes together with discrete- and continuous-time examples.

\section{Problem setup and the covariance inequality}\label{sec_cov}
We consider a one-dimensional exponential family for the observations $X_k$, $k\geq 1$. Let $\nu$ be a $\sigma$-finite measure on $\R$, and define
\[B(u):=\log\left(\int_{\R}\e^{ux}\nu(\ud x)\right),\quad N:=\left\{u\in\R:\int_{\R}\e^{ux}\nu(\ud x)<\infty\right\}.
\]
Thus $B(u)<\infty$ for $u\in N$. Write $S:=\operatorname{supp}\mu\subseteq N^\circ$. For $u\in N$, let
\[P_u(\ud x):=\e^{ux-B(u)}\nu(\ud x).\]
This is a probability distribution. We assume that, conditional on $\Theta=u$, each observation $X_k$ has law $P_u$.
Then $B'(u)=\E_u[X_1]$ and $B''(u)=\Var_u(X_1)>0$. For every bounded non-decreasing function $h$,
\[u_1<u_2\quad\implies\quad\E_{u_1}[h(X_1)]\leq\E_{u_2}[h(X_1)].\]
The same monotonicity holds for the sum of $k$ observations. Define $Y_n:=X_1+\cdots+X_n$. Conditional on $Y_n=y$, the posterior after $n$ observations is
\[\mu_{n,y}(\ud u)=\frac{\e^{uy-nB(u)}\mu(\ud u)}{\int_S \e^{uy-nB(u)}\mu(\ud u)}.\]
Thus, the pair $(n,Y_n)$ determines the entire posterior distribution. Let $T:S\to\R$ be non-decreasing, integrable under $\mu$, and not $\mu$-a.s.\ constant. For $t\geq0$, define the natural sufficient-statistic domain
\[\mathcal Y_t:=\left\{y\in\R:\int_S(1+|T(u)|)\e^{uy-tB(u)}\mu(\ud u)<\infty\right\}.\]
For $y\in\mathcal Y_t$, introduce the interpolated posterior
\begin{equation}\label{eq_t}
\mu_{t,y}(\ud u)=\frac{\e^{uy-tB(u)}\mu(\ud u)}{\int_S\e^{uy-tB(u)}\mu(\ud u)},\quad\Lambda_t(y):=\int_ST(u)\mu_{t,y}(\ud u).
\end{equation}
If $t$ is an integer, this is the usual posterior after $t$ observations. For a general exponential family, a non-integer value of $t$ gives only an interpolation, and there need not be an associated observation process. If, however, $P_{u_0}$ is infinitely divisible for some $u_0\in N^\circ$, then the interpolation is realized by an Esscher family of L\'evy processes; see Section~\ref{sec_continuous}.

By H\"older's inequality, the set $\mathcal Y_t$ is an interval. If $y_1<y_2$, the likelihood ratio of $\mu_{t,y_2}$ with respect to $\mu_{t,y_1}$ is strictly increasing in $u$; since $T$ is non-decreasing and not almost surely constant, $\Lambda_t(y_1)<\Lambda_t(y_2)$. Let $\mathcal Z_t:=\Lambda_t(\mathcal Y_t)$. Every $z\in\mathcal Z_t$ therefore determines a unique $y(t,z)\in\mathcal Y_t$. We write $\mu_{t,z}:=\mu_{t,y(t,z)}$. The inverse $z\mapsto y(t,z)$ is Borel on $\mathcal Z_t$. For the two state derivatives used below, we work on the smaller domain
\[\mathcal Y_t^{(2)}:=\operatorname{int}\left\{y\in\R:
\int_S(1+|T(u)|)(1+u^2)\e^{uy-tB(u)}\mu(\ud u)<\infty\right\},\quad\mathcal Z_t^{(2)}:=\Lambda_t(\mathcal Y_t^{(2)}).\]
On $\mathcal Z_t^{(2)}$, the inverse $y(t,\cdot)$ is continuous and the required moment bounds hold locally uniformly.
\begin{remark}\label{rem_markov}
The conditional distributions of future observations, and hence of future values of $\Pi^T$, depend on the past only through the pair $(n,\Pi_n^T)$. Thus $(\Pi_n^T)_{n\geq0}$ is a time-inhomogeneous Markov chain, with transition kernels defined in Section~\ref{sec_main}.
\end{remark}
Fix a time interval $[t_0,t_1]$ and a level $z$. Suppose that there are $y_-<y_+$ such that, for every $t\in[t_0,t_1]$, the equation $\Lambda_t(y_t)=z$ has a solution $y_t\in(y_-,y_+)$. At each of the four corner points $(t,y)\in\{t_0,t_1\}\times\{y_-,y_+\}$, assume that
\begin{equation}\label{assump_1}
\int_S(1+|T(u)|)(1+|u|+|B(u)|)\e^{uy-tB(u)}\mu(\ud u)<\infty.
\end{equation}
For each fixed $u$, the map $(t,y)\mapsto uy-tB(u)$ is affine, so its maximum over $[t_0,t_1]\times[y_-,y_+]$ is attained at one of the four corners. Condition~\eqref{assump_1} is used only to justify differentiation of the interpolated posterior family along the fixed-$T$ level curve.

We now carry out the calculation for which these assumptions are needed. Under \eqref{assump_1}, the map $(t,y)\mapsto\Lambda_t(y)$ is continuously differentiable, with partial derivatives
\begin{align}
\partial_y\Lambda_t(y)
&=\int_SuT(u)\mu_{t,y}(\ud u)
-\left(\int_Su\mu_{t,y}(\ud u)\right)
 \left(\int_ST(u)\mu_{t,y}(\ud u)\right)\notag\\
&=\Cov_{\mu_{t,y}}\left(T(\Theta),\Theta\right),\\
\partial_t\Lambda_t(y)
&=-\int_ST(u)B(u)\mu_{t,y}(\ud u)
+\left(\int_ST(u)\mu_{t,y}(\ud u)\right)
 \left(\int_SB(u)\mu_{t,y}(\ud u)\right)\notag\\
&=-\Cov_{\mu_{t,y}}(T(\Theta),B(\Theta)).
\end{align}
Since $T$ is non-decreasing, the first covariance above equals
\[\frac{1}{2}\E[(T(U)-T(V))(U-V)]>0,\]
where $U$ and $V$ are independent with distribution $\mu_{t,y}$. Strict positivity follows because $T$ is not $\mu$-a.s.\ constant and $\mu_{t,y}$ is equivalent to $\mu$. Hence $y\mapsto\Lambda_t(y)$ is strictly increasing, and the solution of the level equation, whenever it exists, is unique. The implicit function theorem now implies that $t\mapsto y_t$ is continuously differentiable. Differentiating $\Lambda_t(y_t)=z$ and using the two derivative identities above gives
\[
\frac{\ud y_t}{\ud t}
=\frac{\Cov_{\mu_{t,y_t}}(T(\Theta),B(\Theta))}
{\Cov_{\mu_{t,y_t}}(T(\Theta),\Theta)}.
\]
Write $\mu_t:=\mu_{t,y_t}$. Differentiating the posterior density in \eqref{eq_t} along the level curve gives
\begin{align*}
\frac{\ud}{\ud t}\mu_t(\ud u)
&=\frac{\ud}{\ud t}\left(\frac{\e^{uy_t-tB(u)}}{\int_S\e^{uy_t-tB(u)}\mu(\ud u)}\right)\mu(\ud u)\\
&=\left(u\frac{\ud y_t}{\ud t}-B(u)-\int_S\left(u\frac{\ud y_t}{\ud t}-B(u)\right)\mu_t(\ud u)\right)\mu_t(\ud u)\\
&=:G_t(u)\mu_t(\ud u).
\end{align*}
Therefore, for every test function $f$ satisfying $|f|\leq c(1+|T|)$, the map $t\mapsto\int_Sf \ud\mu_t$ is differentiable, with derivative $\int_SfG_t \ud\mu_t$. Since $B$ is convex, $G_t$ is concave. Moreover, $\mu_t$ has total mass one and $\int_ST(u)\mu_t(\ud u)=z$. Differentiating these two identities with respect to $t$ gives
\begin{equation}\label{eq_fix}
\int_SG_t(u)\mu_t(\ud u)=0,\quad
\int_ST(u)G_t(u)\mu_t(\ud u)=0.
\end{equation}
Hence $\Cov_{\mu_t}(T,G_t)=0$.
We first prove a monotonicity result used in the covariance calculation.
\begin{lemma}\label{lem_ratio}
Let $\mu$ be a probability measure supported on $S$, let $T:S\to\R$ be non-decreasing with $\int_S|T| \ud\mu<\infty$, and let $q:S\to[0,\infty)$ be bounded and non-decreasing. For $w\in\R$, define
\begin{align*}
P(w)&:=\iint_{u<w<v}(T(v)-T(u)) \mu(\ud u)\mu(\ud v),\\
R(w)&:=\iint_{u<w<v}q(u)q(v)(T(v)-T(u)) \mu(\ud u)\mu(\ud v).
\end{align*}
Then $R/P$ is non-decreasing on the set where $P(w)>0$. If $q$ is non-increasing, then $R/P$ is non-increasing on this set.
\end{lemma}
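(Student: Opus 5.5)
The plan is to prove the monotonicity by a direct two-point comparison: fix $w_1<w_2$ with $P(w_1),P(w_2)>0$ and show $R(w_2)P(w_1)\geq R(w_1)P(w_2)$. Throughout, write $m(u,v):=q(u)q(v)(T(v)-T(u))$ for the $R$-integrand. I will split the integration regions into three disjoint pieces:
\[A:=\{u<w_1,\ v>w_2\},\quad B_1:=\{u<w_1,\ w_1<v\leq w_2\},\quad B_2:=\{w_1\leq u<w_2,\ v>w_2\}.\]
Then $\{u<w_1<v\}$ is, up to the diagonal boundary pieces, $A\cup B_1$, and $\{u<w_2<v\}$ is $A\cup B_2$. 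I will handle the boundary conventions (atoms of $\mu$ at $w_1$ or $w_2$) by assigning the closed or open endpoints consistently, so that the decomposition is exact. Writing $P_A$, $R_A$, and so on for the restricted integrals, the target inequality becomes
\[(R_A+R_{B_2})(P_A+P_{B_1})\geq(R_A+R_{B_1})(P_A+P_{B_2}).\]
After cancelling $R_AP_A$, it suffices to prove the three inequalities
\[R_{B_2}P_{B_1}\geq R_{B_1}P_{B_2},\qquad R_{B_2}P_A\geq R_AP_{B_2},\qquad R_AP_{B_1}\geq R_{B_1}P_A.\]
Each of these is a difference of two four-fold integrals against $\mu^{\otimes4}$.

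For the first inequality, the difference equals
\[\iint_{B_2}\iint_{B_1}\bigl[q(u)q(v)-q(u')q(v')\bigr](T(v)-T(u))(T(v')-T(u'))\,\mu^{\otimes4}.\]
Here $(u,v)\in B_2$ and $(u',v')\in B_1$. Since $u\geq w_1>u'$ and $v>w_2\geq v'$, and $q\geq0$ is non-decreasing, the bracket is non-negative. The $T$-differences are also non-negative, so this integral is non-negative.

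The second and third inequalities are the main obstacle, because the two pairs share a coordinate range and the pointwise comparison fails. For the second inequality, both $v$ and $v'$ range over $(w_2,\infty)$, while $u\in[w_1,w_2)$ and $u'<w_1$. I will symmetrize by averaging the integrand with its image under the swap $v\leftrightarrow v'$. Set $a=T(u)\geq b=T(u')$, $x=T(v)$ and $x'=T(v')$. The symmetrized integrand is $q(u)\beta_1-q(u')\beta_2$, where
\[\beta_1=q(v)(x-a)(x'-b)+q(v')(x'-a)(x-b),\qquad \beta_2=q(v')(x-a)(x'-b)+q(v)(x'-a)(x-b).\]
A direct computation gives
\[\beta_1-\beta_2=(q(v)-q(v'))\bigl[(x-a)(x'-b)-(x'-a)(x-b)\bigr]=(a-b)(q(v)-q(v'))(x-x').\]
This is non-negative because $q$ and $T$ are both non-decreasing, so that $q(v)-q(v')$ and $x-x'$ have the same sign. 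Also $\beta_2\geq0$, since all the $T$-differences are non-negative on these regions. Hence
\[q(u)\beta_1-q(u')\beta_2\geq q(u')(\beta_1-\beta_2)\geq0.\]
The third inequality is the mirror image: symmetrize in $u\leftrightarrow u'$, both of which range over $(-\infty,w_1)$, and use $v>w_2\geq v'$ in place of $u\geq w_1>u'$. The integrability needed for Fubini follows from boundedness of $q$ and $\int|T|\,\ud\mu<\infty$.

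For the case where $q$ is non-increasing, I will reflect the problem: replace $u$ by $-u$, $T$ by $-T(-\cdot)$ (which is again non-decreasing), $q$ by $q(-\cdot)$ (which is non-decreasing), $\mu$ by its reflection, and $w$ by $-w$. This map preserves $P$ and $R$ (the $T$-difference keeps its sign) and reverses the order in $w$. Applying the non-decreasing case to the reflected data therefore shows that $R/P$ is non-increasing.
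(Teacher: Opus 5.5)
Your proof is correct, and it takes a genuinely different route from the paper's.

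\textbf{The paper's route.} It fixes $w_1<w_2$ and checks that $K(w,u,v)=\ind_{\{u<w<v\}}(T(v)-T(u))$ satisfies the MTP$_2$ lattice condition on $\{w_1,w_2\}\times S\times S$. It then normalizes $K$ into a probability law for $(W,U,V)$. The inequality $R(w_2)/P(w_2)\geq R(w_1)/P(w_1)$ is read off from the association (FKG/Karlin--Rinott) inequality $\Cov(\ind_{\{W=w_2\}},q(U)q(V))\geq 0$. The non-increasing case follows by using $-q(U)q(V)$ in place of $q(U)q(V)$.

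\textbf{Your route.} You prove the cross-multiplied inequality directly.
\begin{itemize}
\item Your splitting into $A$, $B_1$, $B_2$ is exact as written: the half-open choices already absorb any atoms at $w_1$ and $w_2$, so no boundary adjustment is needed.
\item The $B_1$--$B_2$ term is pointwise non-negative.
\item The two terms that share a coordinate range are handled by the swap symmetrization. Its key identity, $(x-a)(x'-b)-(x'-a)(x-b)=(a-b)(x-x')$, is a Chebyshev-type rearrangement.
\item In the final step $q(u)\beta_1\geq q(u')\beta_1$ you implicitly use $\beta_1\geq 0$. This holds, since all its factors are non-negative, or since $\beta_1\geq\beta_2\geq 0$.
\item Your reflection for non-increasing $q$ is also correct. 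The pair map $(u,v)\mapsto(-v,-u)$ preserves $P$ and $R$ and sends $w$ to $-w$.
\end{itemize}

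\textbf{Trade-offs.} The paper's argument is shorter once the lattice condition is checked. It also applies verbatim to any coordinatewise non-decreasing function of $(U,V)$, not only the product $q(U)q(V)$. The cost is importing the association theorem for MTP$_2$ densities with respect to a general (non-discrete) product measure. Your argument is longer and splits into three cases. In exchange it is entirely self-contained, using only Fubini's theorem and elementary algebra, so it proves by hand the particular instance of the FKG inequality that the lemma needs.
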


\begin{proof}
First, $P(w)\geq0$ for every $w$. Fix $w_1<w_2$ such that $P(w_1)P(w_2)>0$, and define $K(w,u,v):=\ind_{\{u<w<v\}}(T(v)-T(u))\geq0$, where $w\in\{w_1,w_2\}$ and $u,v\in S$. We first verify the MTP$_2$ condition
\begin{equation}\label{eq_tp}
K(x)K(x')\leq K(x\wedge x')K(x\vee x'),
\end{equation}
for $x=(w,u,v)$ and $x'=(w',u',v')$, where the minimum and maximum are taken coordinatewise. If $K(x)K(x')=0$, there is nothing to prove. Suppose that $K(x)K(x')>0$. Then $u<w<v$ and $u'<w'<v'$, with $T(u)<T(v)$ and $T(u')<T(v')$. Without loss of generality, assume that $u\leq u'$. If $v\leq v'$, then $x\wedge x'=(w\wedge w',u,v)$ and $x\vee x'=(w\vee w',u',v')$. The inequalities $u\leq u'<w'$ and $u<w$ give $u<w\wedge w'$, while $w<v\leq v'$ and $w'<v'$ give $w\vee w'<v'$. Thus both indicators equal one, and equality holds in \eqref{eq_tp}. If $v'\leq v$, the new pairs are $(u,v')$ and $(u',v)$. The inequalities $u\leq u'<w'$ and $u<w$ give $u<w\wedge w'$, while $w'<v'$ gives $w\wedge w'<v'$. Similarly, $u'<w'\leq w\vee w'$, and the inequalities $w<v$ and $w'<v'\leq v$ give $w\vee w'<v$. Thus both indicators again equal one. Moreover,
\[T(u)\leq T(u')<T(v')\leq T(v).\]
Consequently,
\begin{align*}
   & (T(v')-T(u))(T(v)-T(u'))-(T(v)-T(u))(T(v')-T(u'))\\
   =&(T(u')-T(u))(T(v)-T(v'))\geq 0.
\end{align*}
Thus \eqref{eq_tp} holds in all cases. Since $\int_S|T| \ud\mu<\infty$, we have $P(w_i)\leq2\int_S|T| \ud\mu<\infty$. Define a probability measure for $(W,U,V)$ by
\[\P(W=w_i,\ U\in\ud u,\ V\in\ud v)=\frac{K(w_i,u,v)}{P(w_1)+P(w_2)}\mu(\ud u)\mu(\ud v),\quad i=1,2.\]
Then $\P(W=w_i)=\frac{P(w_i)}{P(w_1)+P(w_2)}$ and $\E[q(U)q(V)\vert W=w_i]=\frac{R(w_i)}{P(w_i)}$. By \eqref{eq_tp}, the joint distribution of $(W,U,V)$ is TP$_2$. Since the functions $\ind_{\{W=w_2\}}$ and $q(U)q(V)$ are both non-decreasing, the association inequality for TP$_2$ measures \cite{FKG,KR} gives
\begin{align*}
    0&\leq \Cov\left(\ind_{\{W=w_2\}},q(U)q(V)\right)\\
    & = \E[\ind_{\{W=w_2\}}q(U)q(V) ]-\E[\ind_{\{W=w_2\}}]\E[q(U)q(V) ]\\
    & = \P(W=w_2)\frac{R(w_2)}{P(w_2)}- \P(W=w_2)\left(\P(W=w_1)\frac{R(w_1)}{P(w_1)}+\P(W=w_2)\frac{R(w_2)}{P(w_2)} \right)\\
    & = \P(W=w_1)\P(W=w_2)(\frac{R(w_2)}{P(w_2)}-\frac{R(w_1)}{P(w_1)}).
\end{align*}
Since $\P(W=w_1)\P(W=w_2)>0$, it follows that $\frac{R(w_2)}{P(w_2)}\geq\frac{R(w_1)}{P(w_1)}$. If $q$ is non-increasing, then $-q(U)q(V)$ is non-decreasing. Applying the same argument to $\ind_{\{W=w_2\}}$ and $-q(U)q(V)$ gives the reversed inequality.
\end{proof}
Observe that $R(w)=0$ whenever $P(w)=0$. Thus it suffices to consider $w$ for which $P(w)>0$, since the remaining values do not contribute to the covariance calculation below. We now state the covariance inequality that underlies both main results: after reweighting by a non-decreasing function, $T(\Theta)$ and a concave function $G(\Theta)$ are negatively correlated whenever they are orthogonal under the original measure $\mu$.
\begin{proposition}\label{prop_cov}
Let $\mu$ be a probability measure supported on $S$. Let $G:N^\circ\to\R$ be concave and let $T:S\to\R$ be non-decreasing. Suppose that
\[\int_S(|T|+|G|+|TG|) \ud\mu<\infty,\quad\Cov_\mu(T,G)=0.\]
Let $q:S\to[0,\infty)$ be bounded and non-decreasing, and suppose that $\int_Sq(u)\mu(\ud u)>0$. Then
\[\Cov_{q\mu/\int_S q \ud\mu}(T,G)\leq 0.\]
The inequality is reversed if $q$ is non-increasing.
\end{proposition}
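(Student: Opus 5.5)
The plan is to rewrite both covariances as integrals of $G'$ against the kernels $P$ and $R$ of Lemma~\ref{lem_ratio}. The sign then follows from a single-crossing argument that combines the monotonicity of $R/P$ with the monotonicity of $G'$.

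\emph{Double-integral representations.} By the symmetrisation already used for $\partial_y\Lambda_t$, for any finite measure $\rho$ with the required integrability,
\[
\Big(\int_S \ud\rho\Big)^2\Cov_{\rho/\int\ud\rho}(T,G)=\iint_{u<v}(T(v)-T(u))(G(v)-G(u))\,\rho(\ud u)\rho(\ud v).
\]
The diagonal $u=v$ contributes nothing. I would apply this with $\rho=\mu$ and with $\rho=q\mu$. Since $G$ is concave on the open interval $N^\circ\supseteq S$, it is locally absolutely continuous. Its right derivative $G'$ is non-increasing, and $G(v)-G(u)=\int_u^vG'(w)\,\ud w$ for $u<v$. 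Inserting this and exchanging the order of integration (Fubini) should give
\[
\Cov_\mu(T,G)=\int G'(w)P(w)\,\ud w,\qquad \Big(\int_S q\,\ud\mu\Big)^2\Cov_{q\mu/\int q\ud\mu}(T,G)=\int G'(w)R(w)\,\ud w,
\]
with $P,R$ exactly as in Lemma~\ref{lem_ratio}.

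\emph{Justifying Fubini (the main obstacle).} I expect the main difficulty to be justifying this exchange, since $G'$ changes sign and we need $\int|G'|P\,\ud w<\infty$. Because $G'$ is non-increasing, there is a point $w_0\in[-\infty,\infty]$ with $G'\ge0$ on $(-\infty,w_0)$ and $G'\le0$ on $(w_0,\infty)$. I would split the $w$-integral at $w_0$ and apply Tonelli separately to the non-negative integrands $(T(v)-T(u))G'(w)^{+}$ and $(T(v)-T(u))G'(w)^{-}$ on $\{u<w<v\}$. Each of these is bounded by an expression involving $(T(v)-T(u))\,|G(\cdot)-G(\cdot)|$, with the endpoints truncated at $w_0$. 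Finiteness then follows from $\iint |T(v)-T(u)|(|G(u)|+|G(v)|+|G(w_0)|)\,\ud\mu\,\ud\mu<\infty$. That bound uses the assumed integrability of $|T|,|G|,|TG|$ together with independence of the two coordinates, which gives $\iint|T(v)||G(u)|=\int|T|\int|G|$. The same bound handles $R$, since $0\le R\le(\sup q)^2P$.

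\emph{Single-crossing argument.} On $\{P>0\}$, set $r:=R/P$; by Lemma~\ref{lem_ratio} it is non-decreasing. On $\{P=0\}$ we have $R=0$, so those $w$ contribute nothing to either integral. Choose a constant $c$ with $r\le c$ on $\{P>0\}\cap(-\infty,w_0)$ and $r\ge c$ on $\{P>0\}\cap(w_0,\infty)$; this is possible because $r$ is non-decreasing. Using $\int G'P\,\ud w=\Cov_\mu(T,G)=0$,
\[
\int G'R\,\ud w=\int_{\{P>0\}} G'(w)\,(r(w)-c)\,P(w)\,\ud w\le0,
\]
because the integrand is pointwise non-positive: $G'\ge0$ where $r\le c$, and $G'\le0$ where $r\ge c$. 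Dividing by $(\int_S q\,\ud\mu)^2>0$ gives $\Cov_{q\mu/\int q\ud\mu}(T,G)\le0$.

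\emph{The reversed case.} If $q$ is non-increasing, Lemma~\ref{lem_ratio} makes $r$ non-increasing. Choosing $c$ with the reversed inequalities makes the integrand non-negative, which gives the reversed conclusion.
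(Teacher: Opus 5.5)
Your proposal is correct and takes essentially the same route as the paper. Both write $\Cov_\mu(T,G)=\int hP\,\ud w$ and $(\int_S q\,\ud\mu)^2\Cov_{q\mu/\int_S q\ud\mu}(T,G)=\int hR\,\ud w$ using the right derivative and Fubini, justified by the same bound of $\int_u^v|h|$ by a constant plus $|G(u)|+|G(v)|$. Both then conclude with a single-crossing argument, using a separating constant and the monotonicity of $R/P$ from Lemma~\ref{lem_ratio}.

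The only cosmetic difference is in choosing the separating constant. The paper treats separately the case where $hP$ has a fixed sign. You instead choose the constant directly, which works in every case because $0\le R/P\le\|q\|_\infty^2$; you should state this boundedness explicitly when choosing $c$.
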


\begin{proof}
Let $h=D^+G$ be the right derivative of the concave function $G$. Then $h$ is non-increasing and
\[G(v)-G(u)=\int_u^v h(w)\ud w\]
for $u,v\in S$ with $u<v$. Since $h$ is non-increasing, it changes sign at most once. If $G$ attains an interior maximum at $w^*\in[u,v]$, then $\int_u^v|h| \ud w=2G(w^*)-G(u)-G(v)$. If $G$ is monotone on $[u,v]$, then $\int_u^v|h| \ud w=|G(v)-G(u)|$. Thus there exists a finite constant $C=2\max\{G(w^*),0\}$ such that
\[\int_u^v|h(w)| \ud w\leq C+|G(u)|+|G(v)|.\]
Therefore, 
\begin{align*}
&\iint_{u<v}(T(v)-T(u))\int_u^v|h(w)|\ud w \mu(\ud u)\mu(\ud v)\\
\leq& 2C\int_S|T| \ud\mu +2\int_S|TG| \ud\mu +2\left(\int_S|T| \ud\mu\right) \left(\int_S|G| \ud\mu\right)<\infty.
\end{align*}
The bound remains finite after multiplication by the bounded factor $q(u)q(v)$. For independent random variables $U,V$ with distribution $\mu$, we have
\begin{align*}
\Cov_\mu(T,G)
&=\frac12\iint(T(v)-T(u))(G(v)-G(u)) \mu(\ud u)\mu(\ud v)\\
&=\iint_{u<v}(T(v)-T(u))(G(v)-G(u)) \mu(\ud u)\mu(\ud v).
\end{align*}
Using the definitions of $P$ and $R$ in Lemma~\ref{lem_ratio}, the identity $G(v)-G(u)=\int_u^v h(w)\ud w$, and Fubini's theorem, we obtain
\[\Cov_\mu(T,G) = \iint_{u<v}(T(v)-T(u))\left(\int_u^v h(w)\ud w\right)\mu(\ud u)\mu(\ud v)=\int_{N^\circ} h(w)P(w) \ud w.\]
Similarly,
\begin{align*}
\Cov_{q\mu/\int_S q\ud\mu}(T,G)&=\frac{1}{\left(\int_S  q\ud\mu\right)^2}\iint_{u<v}q(u)q(v)(T(v)-T(u))(G(v)-G(u))
\mu(\ud u)\mu(\ud v)\\
&=\frac{1}{\left(\int_Sq\ud\mu\right)^2}\int_{N^\circ}h(w)R(w) \ud w.
\end{align*}
Since $\Cov_\mu(T,G)=0$, we have $\int_{N^\circ}h(w)P(w) \ud w=0$. It suffices to consider $w$ for which $P(w)>0$, since only $P$ and $R$ appear in the integrals. By Lemma~\ref{lem_ratio}, $R/P$ is non-decreasing, whereas $h$ is non-increasing. Thus $\{h>0\}$ lies to the left of $\{h<0\}$. If either $\{P>0\}\cap\{h>0\}$ or $\{P>0\}\cap\{h<0\}$ is empty, then $hP$ has a fixed sign, and $\int_{N^\circ}h(w)P(w) \ud w=0$ implies that $hP=0$ almost everywhere. Since $0\leq R(w)\leq\|q\|_\infty^2P(w)$, it follows that $hR=0$ almost everywhere, and hence the covariance is zero. Otherwise, choose $\gamma$ such that $R(w)/P(w)\leq\gamma$ on $\{P>0\}\cap\{h>0\}$ and $R(w)/P(w)\geq\gamma$ on $\{P>0\}\cap\{h<0\}$. Then $h(w)(R(w)/P(w)-\gamma)\leq0$ everywhere. Consequently,
\[\int_{N^\circ} h(w)R(w)\ud w =\int_{N^\circ} h(w)(\frac{R}{P}(w) -\gamma)P(w)\ud w+\gamma\int_{N^\circ} h(w)P(w)\ud w\leq 0.
\]
Thus $\Cov_{q\mu/\int_Sq \ud\mu}(T,G)\leq0$. If $q$ is non-increasing, Lemma~\ref{lem_ratio} makes $R/P$ non-increasing, so the final inequality is reversed.
\end{proof}

\begin{lemma}\label{lem_frozen}
Let $\mu$ be a probability measure on $S$. Let $T:S\to\R$ be non-decreasing, and let $\psi:N^\circ\to\R$ satisfy
\[\int_S (|T|+|\psi|+|T\psi|)\ud\mu<\infty,\quad \int_S\psi\ud\mu=\int_ST\psi\ud\mu=0.\]
Let $q:S\to[0,1]$ be non-decreasing, and suppose that $Q:=\int_Sq(u)\mu(\ud u)\in(0,1)$. Define
\[z^+:=\frac{\int_ST(u)q(u)\mu(\ud u)}{Q},\quad
z^-:=\frac{\int_ST(u)(1-q(u))\mu(\ud u)}{1-Q}.\]
Then $z^-\leq \int_ST(u)\mu(\ud u)\leq z^+$. If $\psi$ is concave, then for every $a\in[z^-,z^+]$,
\[\int_S(T(u)-a)q(u)\psi(u)\mu(\ud u)\leq 0.\]
If $\psi$ is convex, the inequality is reversed.
\end{lemma}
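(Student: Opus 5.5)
The plan is to exploit that the quantity in question is affine in $a$ and then identify its values at the two endpoints $a=z^\pm$ with reweighted covariances, to which Proposition~\ref{prop_cov} applies directly. Set $z:=\int_ST\ud\mu$ and
\[F(a):=\int_S(T(u)-a)q(u)\psi(u)\mu(\ud u)=\int_STq\psi\ud\mu-a\int_Sq\psi\ud\mu .\]
All integrals are finite because $q$ is bounded and $T,\psi,T\psi$ are $\mu$-integrable. Since $F$ is affine in $a$, it suffices to show $F(z^+)\leq0$ and $F(z^-)\leq0$ when $\psi$ is concave. The convex case then follows by applying the concave case to $-\psi$, which satisfies the same integrability and orthogonality hypotheses.

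First I will establish the ordering $z^-\leq z\leq z^+$. By definition,
\[z=Qz^++(1-Q)z^-,\]
so $z$ is a convex combination of $z^+$ and $z^-$. Moreover, $Q(z^+-z)=\int_STq\ud\mu-z\int_Sq\ud\mu=\Cov_\mu(T,q)$. This is nonnegative by Chebyshev's association inequality, since $T$ and $q$ are both non-decreasing; it can be checked via the symmetrization $\tfrac12\iint(T(v)-T(u))(q(v)-q(u))\mu(\ud u)\mu(\ud v)\geq0$. Hence $z^+\geq z$, and the convex-combination identity then gives $z^-\leq z$.

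For the endpoint $a=z^+$, note that $z^+$ is exactly the mean of $T$ under the probability measure $q\mu/Q$. Therefore
\[F(z^+)=Q\,\Cov_{q\mu/Q}(T,\psi).\]
The hypotheses $\int_S\psi\ud\mu=\int_ST\psi\ud\mu=0$ give $\Cov_\mu(T,\psi)=0$, and $q$ is bounded, nonnegative, non-decreasing with $Q>0$. Proposition~\ref{prop_cov}, with $G=\psi$, then yields $F(z^+)\leq0$.

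For the endpoint $a=z^-$, I will split $q=1-(1-q)$:
\[F(z^-)=\int_S(T-z^-)\psi\ud\mu-\int_S(T-z^-)(1-q)\psi\ud\mu .\]
The first term vanishes by the two orthogonality relations. In the second term, $z^-$ is the mean of $T$ under $(1-q)\mu/(1-Q)$, so that term equals $(1-Q)\Cov_{(1-q)\mu/(1-Q)}(T,\psi)$. The weight $1-q$ is bounded, nonnegative and non-increasing, with $1-Q>0$. The reversed case of Proposition~\ref{prop_cov} therefore makes this covariance $\geq0$, and hence $F(z^-)\leq0$.

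Affinity of $F$ then gives $F(a)\leq0$ for all $a\in[z^-,z^+]$. The only delicate point I anticipate is the $z^-$ endpoint. There one must notice that passing to the complementary weight $1-q$ uses the orthogonality relations to kill the unweighted term, and that it flips monotonicity, which is what brings in the reversed inequality of Proposition~\ref{prop_cov}.
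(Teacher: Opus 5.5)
Your proposal is correct and follows the paper's proof essentially step for step. You obtain $z^-\leq z\leq z^+$ from $\Cov_\mu(T,q)\geq0$ and the convex-combination identity, identify $F(z^+)=Q\,\Cov_{q\mu/Q}(T,\psi)$ and $F(z^-)=-(1-Q)\Cov_{(1-q)\mu/(1-Q)}(T,\psi)$, sign both with Proposition~\ref{prop_cov} and its reversed case, and conclude by affinity in $a$; your reduction of the convex case to the concave one via $-\psi$ makes explicit what the paper only calls ``analogous.''
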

\begin{proof}
Define $z:=\int_ST(u)\mu(\ud u)$. Since $T$ and $q$ are both non-decreasing, 
\[0\leq \Cov_\mu(T,q)=
\int_STq\ud\mu-\left(\int_ST\ud\mu\right)\left(\int_Sq\ud\mu\right)=Q(z^+-z).\]
Hence $z\leq z^+$. 
Moreover, since 
\[z = \int_Sq(u)\mu(\ud u)z^++(1-\int_Sq(u)\mu(\ud u))z^-,\]
this implies that $z^-\leq z$. Suppose that $\psi$ is concave. At $a=z^+$,
\[\int_S(T(u)-z^+)q(u)\psi(u)\mu(\ud u) = Q\Cov_{q\mu/\int_Sq\ud\mu}(T,\psi)\leq 0\]
by Proposition~\ref{prop_cov}. At $a=z^-$, the identity $\int_S(T(u)-z^-)\psi(u)\mu(\ud u)=0$ gives
\begin{align*}
    \int_S(T(u)-z^-)q(u)\psi(u)\mu(\ud u)&=-\int_S(T(u)-z^-)(1-q(u))\psi(u)\mu(\ud u)\\
    & = -\left(1-Q\right)\Cov_{(1-q)\mu/(1-\int_Sq\ud\mu)}(T,\psi)\leq 0,
\end{align*}
where the inequality follows because $1-q$ is non-increasing and Proposition~\ref{prop_cov} gives
\[\Cov_{(1-q)\mu/(1-\int_Sq \ud\mu)}(T,\psi)\geq 0.
\]
The map $a\mapsto\int_S(T(u)-a)q(u)\psi(u)\mu(\ud u)$ is affine and non-positive at both endpoints $z^-$ and $z^+$. Therefore, $\int_S(T(u)-a)q(u)\psi(u)\mu(\ud u)\leq0$ for every $a\in[z^-,z^+]$. The argument for convex $\psi$ is analogous.
\end{proof}

\section{Convex order comparison and preservation of convexity}\label{sec_main}
We now apply Proposition~\ref{prop_cov} to the continuous-time posterior interpolation. Recall that, for probability distributions $\alpha$ and $\beta$ with the same mean, $\alpha\leq_{\cx}\beta$ is equivalent to
\begin{equation}\label{eq_call_cx}
\int(x-a)^+\alpha(\ud x)\leq\int(x-a)^+\beta(\ud x)\quad\text{for every }a\in\R.
\end{equation}
See \cite{SS}. Equivalently, by Strassen's theorem, there exist random variables $X\sim\alpha$ and $Y\sim\beta$ on a common probability space such that $\E[Y\vert X]=X$. Thus one may write $Y=X+Z$, where $\E[Z\vert X]=0$, and $\beta$ is a mean-preserving spread of $\alpha$; see \cite{S}.

We first define the posterior transition kernel without conditioning on a possibly null event. Fix $t\geq0$, an integer $k\geq1$, and $u_*\in N^\circ$. Let $P_u^{(k)}$ be the law, conditional on $\Theta=u$, of the sufficient statistic from the next $k$ observations. Then
\[\ell_k(u,x):=\frac{\ud P_u^{(k)}}{\ud P_{u_*}^{(k)}}(x)
=\exp\!\left\{(u-u_*)x-k\bigl(B(u)-B(u_*)\bigr)\right\}.\]
For $z\in\mathcal Z_t$, set
\[\rho_{t,k}(z,x):=\int_S\ell_k(u,x)\mu_{t,z}(\ud u),
\quad
\eta_{t,k}(z,x):=\int_S|T(u)|\ell_k(u,x)\mu_{t,z}(\ud u),
\]
and define the updated posterior mean by
\begin{equation}\label{eq_discrete_update}
M_{t,z}^{(k)}(x)
:=
\begin{cases}
\displaystyle
\frac{\int_ST(u)\ell_k(u,x)\mu_{t,z}(\ud u)}
{\rho_{t,k}(z,x)},
&0<\rho_{t,k}(z,x)<\infty,\ \eta_{t,k}(z,x)<\infty,\\[10pt]
z,&\text{otherwise}.
\end{cases}
\end{equation}
On the full-measure set where the ratio in \eqref{eq_discrete_update} is defined, the posterior formula and the likelihood ratio above give
\[M_{t,z}^{(k)}(x)=\Lambda_{t+k}\bigl(y(t,z)+x\bigr).\]
The predictive law of the future sufficient statistic is $\int_SP_u^{(k)}(\ud x)\mu_{t,z}(\ud u)$. For $C\in\mathcal B(\R)$, define
\begin{equation}\label{eq_discrete_kernel}
K_{t,k}^T(z,C):=\int_{\R}\ind_C\!\left(M_{t,z}^{(k)}(x)\right)
\int_SP_u^{(k)}(\ud x)\mu_{t,z}(\ud u),\quad z\in\mathcal Z_t,
\end{equation}
and let
\[\mathcal K_{t,k}^Tf(z):=\int_{\R}f(w)K_{t,k}^T(z,\ud w).\]
For each $z\in\mathcal Z_t$, Tonelli's theorem shows that the first case in \eqref{eq_discrete_update} holds on a $P_{u_*}^{(k)}$-full set and that the convention on its complement does not affect the kernel. The positive and negative parts of the signed numerator in \eqref{eq_discrete_update} are parameter integrals of nonnegative Borel functions; taking their difference on the Borel set $\{\eta_{t,k}<\infty\}$ shows that $M_{t,z}^{(k)}(x)$ is jointly Borel in $(z,x)$. Thus \eqref{eq_discrete_kernel} is a Borel transition kernel on $\mathcal Z_t$. It also satisfies
\begin{equation}\label{eq_kernel_mean}
\int_{\R}wK_{t,k}^T(z,\ud w)=z.
\end{equation}
We will repeatedly use the following consequence of Bayes' formula: for every $a\in\R$ and $A\in\mathcal B(\R)$,
\begin{equation}\label{eq_frozen_event}
\int_{\R}\bigl(M_{t,z}^{(k)}(x)-a\bigr)\ind_A(x)\int_SP_u^{(k)}(\ud x)\mu_{t,z}(\ud u)
=\int_S(T(u)-a)P_u^{(k)}(A)\mu_{t,z}(\ud u).
\end{equation}
At integer times, Bayes' formula and Tonelli's theorem give $Y_n\in\mathcal Y_n$, and hence $\Pi_n^T\in\mathcal Z_n$, almost surely. Moreover, for every $C\in\mathcal B(\R)$,
\[\P\bigl(\Pi_{n+k}^T\in C\vert\mathcal F_n\bigr)
=K_{n,k}^T(\Pi_n^T,C)
\quad\text{a.s.}
\]
We use $\E_{t,z}$ only as shorthand for expectation in the Bayesian experiment initialized from $\mu_{t,z}$, rather than as conditioning on $\{\Pi_t^T=z\}$.

\begin{theorem}\label{thm1}
Let $m,n,k$ be integers with $m<n$ and $k\geq1$, and let $z\in\mathcal Z_m\cap\mathcal Z_n$. Suppose that there are $y_-<y_+$ such that $y(r,z)\in(y_-,y_+)$ for every $r\in[m,n]$ and that \eqref{assump_1} holds at the four corners of $[m,n]\times[y_-,y_+]$. Then
\[
K_{m,k}^T(z,\cdot)\geq_{\cx}K_{n,k}^T(z,\cdot).
\]
\end{theorem}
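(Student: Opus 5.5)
Both kernels have mean $z$ by \eqref{eq_kernel_mean}, so by \eqref{eq_call_cx} it suffices to show that, for every $a\in\R$,
\[
c_r(a):=\int_{\R}(w-a)^+K_{r,k}^T(z,\ud w)
\]
satisfies $c_m(a)\geq c_n(a)$. I will obtain this by an envelope argument along the fixed-$z$ level curve $r\mapsto\mu_r:=\mu_{r,z}$, $r\in[m,n]$. The interpolated kernel $K_{r,k}^T$ is defined for every real $r$, and the level curve lies in $(y_-,y_+)$, where \eqref{assump_1} gives the differentiability computed in Section~\ref{sec_cov}.

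The first step is to write the call payoff as a supremum over frozen events. Since $M_{r,z}^{(k)}(x)=\Lambda_{r+k}(y(r,z)+x)$ is non-decreasing in $x$, the event $\{M^{(k)}_{r,z}>a\}$ is an upper set $A_r=(x_r,\infty)$ or $[x_r,\infty)$. For any Borel $A$, \eqref{eq_frozen_event} gives
\[
\Phi_r(A):=\int_S(T(u)-a)P_u^{(k)}(A)\,\mu_r(\ud u)=\int(M^{(k)}_{r,z}-a)\ind_A \,\ud(\text{predictive}).
\]
This quantity is at most $c_r(a)$, with equality for $A=A_r$. Hence
\[
c_r(a)=\sup_{A\text{ upper}}\Phi_r(A).
\]

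Next, fix an upper set $A$ and put $q(u):=P_u^{(k)}(A)$. By MLR, $q$ is non-decreasing with values in $[0,1]$, and $q$ does not depend on $r$. Differentiating along the level curve gives
\[
\tfrac{\ud}{\ud r}\Phi_r(A)=\int_S(T-a)\,q\,G_r\,\ud\mu_r,
\]
which is justified because $|(T-a)q|\leq c(1+|T|)$. The function $G_r$ is concave and satisfies \eqref{eq_fix}, so Lemma~\ref{lem_frozen} makes this derivative non-positive whenever $a\in[z_r^-,z_r^+]$. Here $z_r^\pm$ are the means of $T$ under the $q\mu_r$- and $(1-q)\mu_r$-normalized measures. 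Degenerate cases $Q\in\{0,1\}$ are trivial: then $\Phi_r$ equals $0$ or $z-a$, independently of $r$.

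The main obstacle is that $a$ need not lie in $[z_r^-,z_r^+]$ for every frozen $A$ and every $r$. I will apply the monotonicity only to the optimal event at each time. Take $A=A_s$ for a fixed $s$. At $r=s$, $z_s^+$ is the mean of $M$ on $\{M>a\}$, so $z_s^+\geq a$, and $z_s^-$ is the mean on $\{M\leq a\}$, so $z_s^-\leq a$. Thus $a\in[z_s^-,z_s^+]$ and $\partial_r\Phi_r(A_s)|_{r=s}\leq 0$.

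An envelope (Danskin-type) argument then concludes. The function $c_r(a)$ is a supremum of differentiable functions $\Phi_r(A)$ whose maximizer at time $s$ has non-positive $r$-derivative there. This gives that the upper Dini derivative of $c_\cdot(a)$ is non-positive: $c_{s+h}(a)\geq\Phi_{s+h}(A_{s+h})$, and one compares with $\Phi_{s}(A_{s+h})\leq c_s(a)$. More precisely,
\[
c_{s+h}-c_s\leq\Phi_{s+h}(A_{s+h})-\Phi_s(A_{s+h})=h\,\partial_r\Phi_{\xi}(A_{s+h}).
\]
The right side requires $a\in[z^\pm_\xi]$ for the event $A_{s+h}$. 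This holds at $\xi=s+h$ and, by continuity in $r$, approximately nearby; I will control this with local uniformity of the derivative bounds on the compact rectangle. Continuity of $r\mapsto c_r(a)$, which follows from dominated convergence under \eqref{assump_1}, then gives $c_m(a)\geq c_n(a)$ for all $a$. This is the convex-order claim.
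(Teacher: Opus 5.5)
There is a genuine gap. Your reduction to call payoffs, the frozen-event representation $c_r(a)=\sup_A\Phi_r(A)$, the derivative formula, and the check that $a\in[z_s^-,z_s^+]$ for the event $A_s$ optimal at time $s$ are all correct. The gap is the step you flag yourself. After writing $c_{s+h}-c_s\le h\,\partial_r\Phi_\xi(A_{s+h})$ with $\xi\in(s,s+h)$, you need $\partial_r\Phi_\xi(A_{s+h})\le o(1)$. But you only know $a\in[z^-,z^+]$ for $A_{s+h}$ at time $s+h$, not at $\xi$.

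``Continuity in $r$ plus local uniformity of the derivative bounds'' does not supply this. The event $A_{s+h}$ moves with $h$. What is uniformly controlled is $\Phi_r(A)=Q(r)(z_r^+-a)$, whose $r$-derivative is bounded by $C:=\sup_{r\in[m,n]}\int_S|T-a|\,|G_r|\,\ud\mu_r$ uniformly in $A$; the quantities $z_r^\pm$ themselves are not controlled. Suppose, say, $a>z^+_\xi$. Using $\Phi_{s+h}(A_{s+h})=c_{s+h}(a)\ge0$ and the affine structure behind Lemma~\ref{lem_frozen}, you only get
\[
\partial_r\Phi_\xi(A_{s+h})\le (a-z^+_\xi)\,|Q'(\xi)|\le Ch\,\frac{|Q'(\xi)|}{Q(\xi)} .
\]
Here $|Q'(\xi)|/Q(\xi)$ is bounded only by the mean of $|G_\xi|$ under $q\mu_\xi/Q(\xi)$. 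That mean is not uniformly bounded over upper events when $G_\xi$ is unbounded on $S$ (e.g.\ Gaussian observations, where $G_\xi$ is a concave quadratic) and $Q$ is small. The same issue arises with $1-Q$ on the other side. Closing the step along your route would need a separate analysis of how $A_{s+h}$ converges to $A_s$, including the degenerate cases $Q_s(A_s)\in\{0,1\}$.

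The missing idea is that, for a \emph{fixed} upper event with $q(u)=P_u^{(k)}(A)$, the two conditional means move monotonically along the level curve. Differentiating $z^+_r=\int_S Tq\,\ud\mu_r/Q(r)$ gives $(z^+)'(r)=\Cov_{q\mu_r/Q(r)}(T,G_r)\le0$. Similarly, $(z^-)'(r)=\Cov_{(1-q)\mu_r/(1-Q(r))}(T,G_r)\ge0$. Both follow from Proposition~\ref{prop_cov} together with \eqref{eq_fix}. Hence $[z_r^-,z_r^+]$ only widens as $r$ decreases. So $a\in[z^-_{s+h},z^+_{s+h}]$ implies $a\in[z^-_\xi,z^+_\xi]$ exactly for every $\xi\le s+h$, and your mean-value bound becomes $c_{s+h}-c_s\le0$.

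This also makes the Danskin/Dini step and the continuity of $r\mapsto c_r(a)$ unnecessary. Freeze $A=A_n$ once at the final time and put $H(r):=\Phi_r(A_n)$. Then $a\in[z_r^-,z_r^+]$ for all $r\in[m,n]$, so Lemma~\ref{lem_frozen} gives $H'\le0$ on the whole interval. Therefore $c_n(a)=H(n)\le H(m)\le c_m(a)$, which is exactly the paper's proof.
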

\begin{proof}
Fix $m\leq t<s\leq n$ and $a\in\R$, and write $c_a(w):=(w-a)^+$. For $t\leq r\leq s$, let $\mu_r:=\mu_{r,z}$ and $M_r(x):=M_{r,z}^{(k)}(x)$. The map $M_s$ is non-decreasing on a $P_{u_*}^{(k)}$-full set. Hence its superlevel set agrees $P_{u_*}^{(k)}$-almost everywhere with an upper Borel set. All the laws $P_u^{(k)}$ are equivalent to $P_{u_*}^{(k)}$, so this replacement changes none of the probabilities below. Choose such an upper set $A$ for $\{x:M_s(x)>a\}$, and let $q(u):=P_u^{(k)}(A)$. Then $q$ takes values in $[0,1]$ and is non-decreasing. Keep $A$ fixed as $r$ varies from $t$ to $s$, and define $H(r):=\int_S(T(u)-a)q(u)\mu_r(\ud u)$. By \eqref{eq_frozen_event}, this is the frozen-event integral on the left-hand side of that identity.
By the choice of $A$, $H(s)=\mathcal K_{s,k}^Tc_a(z)$. At time $t$, the fixed set $A$ need not maximize the corresponding event integral, and hence $H(t)\leq\mathcal K_{t,k}^Tc_a(z)$. It therefore suffices to show that $H$ is non-increasing. We have
\[H'(r)=\int_S(T(u)-a)q(u)G_r(u)\mu_r(\ud u).\]
To determine its sign, set $Q(r):=\int_Sq(u)\mu_r(\ud u)$. If $q=0$ $\mu$-a.e., then $H\equiv0$; if $q=1$ $\mu$-a.e., then $H\equiv z-a$. Otherwise, when $0<Q(r)<1$, define $z^+(r)$ and $z^-(r)$ as above, with $(\mu,Q)$ replaced by $(\mu_r,Q(r))$.
Since $\int_ST(u)\mu_r(\ud u)=z$, we can write $z$ and $H$ as
\[z=Q(r)z^+(r)+(1-Q(r))z^-(r),\quad H(r)=Q(r)(z^+(r)-a).\]
We first compute the derivatives of $z^+$ and $z^-$. Since
\[\frac{\ud}{\ud r}\int_S T(u)q(u)\mu_r(\ud u)=\int_S T(u)q(u)G_r(u)\mu_r(\ud u)\]
and $Q'(r)=\int_Sq(u)G_r(u)\mu_r(\ud u)$, we obtain $(z^+)'(r)=\Cov_{q\mu_r/Q(r)}(T,G_r)\leq0$. The inequality follows from Proposition~\ref{prop_cov}, because $G_r$ is concave, $\Cov_{\mu_r}(T,G_r)=0$ by \eqref{eq_fix}, and $q$ is non-decreasing. Similarly, since $1-q$ is non-increasing,
\[(z^-)'(r)=\Cov_{(1-q)\mu_r/(1-Q(r))}(T,G_r)\geq 0.\]
At time $s$, the maximizing property of $A$ and \eqref{eq_frozen_event} give $z^-(s)\leq a\leq z^+(s)$.
Since $z^+$ is non-increasing and $z^-$ is non-decreasing, we have $z^-(r)\leq z^-(s)\leq a\leq z^+(s)\leq z^+(r)$ for $t\leq r\leq s$. Hence $a\in[z^-(r),z^+(r)]$ for every $r\in[t,s]$.
The function $G_r$ is concave and satisfies the orthogonality relations \eqref{eq_fix}. Applying Lemma~\ref{lem_frozen} to the derivative above, with $\mu=\mu_r$ and $\psi=G_r$, gives $H'(r)\leq0$. It follows that
\[
\mathcal K_{s,k}^Tc_a(z)=H(s)\leq H(t)\leq\mathcal K_{t,k}^Tc_a(z).
\]
For each $r\in\{t,s\}$, the $k$-step transition law is integrable and has mean $z$ by \eqref{eq_kernel_mean}. Since $a$ is arbitrary, the convex-order comparison follows from \eqref{eq_call_cx}. Taking $t=m$ and $s=n$ in the interpolation proves the result.
\end{proof}
As a consequence, suppose that the assumptions of Theorem~\ref{thm1} hold. Let $v$ be convex and suppose that $\mathcal K_{r,k}^T|v|(z)<\infty$ for $r=m,n$. Then, for $m<n$,
\[\mathcal K_{m,k}^Tv(z)\geq\mathcal K_{n,k}^Tv(z).
\]
From a decision-theoretic perspective, suppose that, after observing the next $k$ outcomes, a decision maker chooses an action $\alpha\in\mathcal A$ and receives the payoff $g(\alpha,u)=a(\alpha)+b(\alpha)T(u)$. The optimized payoff then depends on the posterior only through $z$, and
\[v(z):=\sup_{\alpha\in\mathcal A}\{a(\alpha)+b(\alpha)z\}
\]
is convex. The value of this information at time $t$ is $\mathcal K_{t,k}^Tv(z)-v(z)$. If the $k$ observations can be purchased at a fixed cost $c_k\geq0$, the decision maker purchases them precisely when $\mathcal K_{t,k}^Tv(z)-v(z)\geq c_k$. Thus, at a fixed posterior state $z$, the decision maker's willingness to pay for information decreases with information time.
\begin{remark}\label{rem_information}
The convex-order comparison is not fundamentally tied to the number of i.i.d.\ observations. Suppose that the current posteriors have the form $\mu_{t,y}(\ud u)\propto\exp\{uy-C_t(u)\}\mu(\ud u)$, where $C_s-C_t$ is convex whenever $t<s$. Along a $\Pi^T$-level curve, the log-likelihood ratio is then concave in $u$, which is exactly the property used in the proof of Theorem~\ref{thm1}. If both current posteriors are exposed to the same future experiment $Y$, and if, for every $a\in\R$, the function $u\mapsto\P_u\left(\E[T(\Theta)\vert Y]>a\right)$ is non-decreasing, the same convex-order comparison follows. Thus the observations may be conditionally independent without being identically distributed.

For example, suppose that $X_i\vert\Theta=u\sim\operatorname{Poisson}(a_i\e^u)$, where the parameters $a_i>0$ may differ. Then the posterior after $n$ observations is proportional to
\[\exp\{uY_n-A(n)\e^u\}\mu(\ud u),\]
where $A(n)=\sum_{i=1}^na_i$ and $Y_n=\sum_{i=1}^nX_i$. Thus heterogeneous observations induce a clock change from sample time $n$ to information time $A(n)$. If the two current posteriors are exposed to the same future Poisson experiment, the convex-order comparison continues to hold. The comparison is therefore naturally indexed by accumulated information rather than by the number of observations. The i.i.d.\ exponential-family model corresponds to the special case $a_i=a$.
\end{remark}
Theorem~\ref{thm1} establishes a family of distributions that decreases in convex order along a posterior level curve. Read in reverse information time, this family can be realized as the marginals of a martingale by Kellerer's theorem~\cite{K}. The following corollary gives a local measure of this contraction.
\begin{corollary}\label{cor_cov_monot}
Under the assumptions of Theorem~\ref{thm1}, along a fixed level $\Lambda_t(y_t)=z$, the map $t\mapsto\Cov_{\mu_{t,y_t}}(T(\Theta),\Theta)$ is non-increasing.
\end{corollary}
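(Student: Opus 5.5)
The plan is to differentiate $C(t):=\Cov_{\mu_t}(T(\Theta),\Theta)$ along the level curve and reduce the sign of the derivative to Lemma~\ref{lem_frozen} with threshold $a=z$, using a layer-cake decomposition of the identity function into indicators of upper sets.

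\emph{Step 1: the derivative.} Since $\int_S T\,\ud\mu_t=z$ is constant along the curve, $C(t)=\int_S (T(u)-z)\,u\,\mu_t(\ud u)$. The differentiation rule for $\int_S f\,\ud\mu_t$ established before \eqref{eq_fix} then gives
\[
C'(t)=\int_S (T(u)-z)\,u\,G_t(u)\,\mu_t(\ud u),
\]
provided $(T-z)uG_t$ is dominated locally uniformly in $t$. By \eqref{eq_fix}, $\int_S (T-z)G_t\,\ud\mu_t=0$, so for any constant $c$ we may replace $u$ by $u-c$ in this formula.

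\emph{Step 2: layer-cake decomposition.} Write
\[
u-c=\int_c^\infty \ind_{\{u>w\}}\,\ud w-\int_{-\infty}^c \ind_{\{u\le w\}}\,\ud w .
\]
By \eqref{eq_fix}, $\int_S (T-z)\ind_{\{u\le w\}}G_t\,\ud\mu_t=-\int_S (T-z)\ind_{\{u>w\}}G_t\,\ud\mu_t$. Hence, after Fubini,
\[
C'(t)=\int_{\R}\int_S (T(u)-z)\,q_w(u)\,G_t(u)\,\mu_t(\ud u)\,\ud w,\qquad q_w:=\ind_{(w,\infty)} .
\]
Each $q_w$ is non-decreasing with values in $[0,1]$.

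\emph{Step 3: sign.} Fix $w$. If $Q_w:=\mu_t(q_w)\in\{0,1\}$, the inner integral vanishes: when $Q_w=0$ the integrand is zero $\mu_t$-a.e., and when $Q_w=1$ it equals $\int_S (T-z)G_t\,\ud\mu_t=0$. Otherwise, Lemma~\ref{lem_frozen} applies with $\mu=\mu_t$, $\psi=G_t$ (concave, satisfying \eqref{eq_fix}) and $q=q_w$. Its first assertion gives $z^-\le \int_S T\,\ud\mu_t=z\le z^+$, so $a=z$ is admissible, and its second assertion yields a non-positive inner integral. Therefore $C'(t)\le 0$, and $C$ is non-increasing on $[m,n]$. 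Continuity at the endpoints follows from the continuity of $t\mapsto y_t$ and dominated convergence.

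\emph{Main obstacle.} The main difficulty is integrability. Fubini in Step 2 and differentiation under the integral in Step 1 require $\int_S |T|\,(1+u^2+|u||B|)\,\ud\mu_t<\infty$ locally uniformly, which is more than \eqref{assump_1} states at the corners. I would handle this by working on interior times $t\in(m,n)$. For such $t$, the point $(t,y_t)$ lies strictly inside the rectangle $[m,n]\times[y_-,y_+]$, so there is $\varepsilon>0$ with $(t',y')\in$ rectangle for all $|t'-t|,|y'-y_t|\le\varepsilon$. Affine maximisation over this rectangle bounds $\e^{uy'-t'B}$ by the corner weights. The tilts by $\e^{\pm\varepsilon u}$ and $\e^{\pm\varepsilon B}$ then dominate any polynomial in $|u|$ and $|B|$, giving the needed moments locally uniformly. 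This yields $C'\le0$ on $(m,n)$, and continuity extends the monotonicity to $[m,n]$.
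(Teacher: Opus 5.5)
Your argument is correct and is essentially the paper's proof. It uses the same decomposition of the covariance into the layers $\int_S (T(u)-z)\ind_{\{u>c\}}\mu_t(\ud u)$ and the same application of Lemma~\ref{lem_frozen} with $q=\ind_{(c,\infty)}$, $a=z$, $\psi=G_t$. The paper reverses the order: it shows each nonnegative layer $F_c(t)$ is non-increasing and then integrates in $c$ by Tonelli, so it only differentiates test functions dominated by a multiple of $1+|T|$, and the integrability issue you correctly resolve by interior tilting and endpoint continuity never arises.
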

\begin{proof}
Write $\mu_t:=\mu_{t,y_t}$. For $c\in\R$, define
\[F_c(t)
:=\Cov_{\mu_t}(T(\Theta),\ind_{\{\Theta>c\}})=\int_S (T(u)-z)\ind_{\{u>c\}}\mu_t(\ud u).\]
Since $T$ and $u\mapsto\ind_{\{u>c\}}$ are non-decreasing, we have $F_c(t)\geq0$. If $0<\mu_t((c,\infty))<1$, then
\[F_c'(t)=\int_S(T(u)-z)\ind_{\{u>c\}}G_t(u)\mu_t(\ud u).\]
By \eqref{eq_fix}, the concave function $G_t$ satisfies the orthogonality conditions in Lemma~\ref{lem_frozen}. In that lemma, take $q(u)=\ind_{\{u>c\}}$, $a=z$, and $\psi=G_t$. This gives $F_c'(t)\leq0$. If $\mu_t((c,\infty))\in\{0,1\}$, then $F_c(t)=0$, and the same monotonicity conclusion holds. Hence $F_c(t)\geq F_c(s)$ whenever $t\leq s$. Therefore,
\begin{align*}
\Cov_{\mu_t}(T(\Theta),\Theta)&=\frac{1}{2}\iint(T(v)-T(u))(v-u)\mu_t(\ud u)\mu_t(\ud v)\\
&=\int_{\R}\Cov_{\mu_t}(T(\Theta),\ind_{\{\Theta>c\}})\ud c\\
&=\int_{\R}F_c(t)\ud c\geq \int_{\R}F_c(s)\ud c = \Cov_{\mu_s}(T(\Theta),\Theta).
\end{align*}
\end{proof}
An equivalent geometric interpretation is that the level curves spread apart as information accumulates. Observe that $\partial_z y(t,z)=1/\Cov_{\mu_{t,y(t,z)}}(T(\Theta),\Theta)$. Corollary~\ref{cor_cov_monot} then implies that $\partial_z y(t,z)$ is non-decreasing in $t$. Consequently, for $z_1<z_2$, the difference $y(t,z_2)-y(t,z_1)=\int_{z_1}^{z_2}\partial_z y(t,z) \ud z$ is non-decreasing in $t$. Thus distinct $\Pi^T$-level curves spread apart, recovering \cite[Corollary~4.2]{EW}. For $T(u)=\ind_{\{u>\theta_0\}}$, write $\pi=z$. The intermediate quantity $F_c(t)$ is then given by
\begin{align*}
F_c(t)=
\begin{cases}
\pi\P_{t,\pi}(\Theta\leq c), & c<\theta_0,\\
(1-\pi)\P_{t,\pi}(\Theta>c), & c\geq\theta_0.
\end{cases}
\end{align*}
The fact that $F_c$ decreases in $t$ precisely recovers \cite[Theorem~4.1]{EW}. For $T(u)=u$, the same result says that posterior-mean level curves spread apart; equivalently, the posterior variance decreases along each level curve.

Theorem~\ref{thm1} compares posterior kernels at different times. Many stopping problems involve either concave cost functionals or convex gain functionals, and the convex order of the $k$-step transition has important implications for the time monotonicity of the value function and the structure of the stopping boundaries. To use this comparison recursively, however, the transition operator must also preserve concavity or convexity. We now establish the corresponding convexity-preservation property.

Fix $t\geq0$. On $\mathcal Y_t^{(2)}$, the map $\Lambda_t$ is twice continuously differentiable. Moreover, because $T$ is non-decreasing and not $\mu$-a.s.\ constant, $\partial_y\Lambda_t>0$. We use the inverse relation $y=y(t,z)$ and the state-indexed posterior $\mu_{t,z}$ defined above throughout the following calculation.
\begin{lemma}\label{lem_2nd_deriv}
Along the parametrization $y=y(t,z)$, the first and second derivatives of $\mu_{t,y}$ satisfy
\begin{equation}\label{eq_state_derivatives}
\frac{\partial}{\partial z}\mu_{t,y}(\ud u)
=\frac{u-\E_{\mu_{t,y}}[\Theta]}{\Cov_{\mu_{t,y}}(T(\Theta),\Theta)}\mu_{t,y}(\ud u),
\quad
\frac{\partial^2}{\partial z^2}\mu_{t,y}(\ud u)=Q_z(u)\mu_{t,y}(\ud u).
\end{equation}
where
\[Q_z(u)=\frac{ (u-\E_{\mu_{t,y}}[\Theta] )^2-\Var_{\mu_{t,y}}(\Theta)}{\Cov^2_{\mu_{t,y}}(T(\Theta),\Theta)}-
\frac{\E_{\mu_{t,y}} \left[(T(\Theta)-z) (\Theta-\E_{\mu_{t,y}}[\Theta] )^2\right]}{\Cov^3_{\mu_{t,y}}(T(\Theta),\Theta)}(u-\E_{\mu_{t,y}}[\Theta] ).\]
The function $Q_z$ is convex and satisfies
\begin{equation}\label{eq_Q}
\int_S Q_z(u) \mu_{t,y}(\ud u)=0,\quad\int_S T(u)Q_z(u) \mu_{t,y}(\ud u)=0.
\end{equation}
\end{lemma}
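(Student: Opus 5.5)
The plan is a direct computation with the chain rule along $y=y(t,z)$, followed by two observations: the orthogonality relations come from differentiating identities, and convexity is read off from the explicit form of $Q_z$.

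\emph{Step 1: the $y$-derivatives.} For fixed $t$, write $m(y):=\E_{\mu_{t,y}}[\Theta]$. Differentiating the density in \eqref{eq_t} with respect to $y$ gives
\[
\partial_y\mu_{t,y}(\ud u)=(u-m(y))\,\mu_{t,y}(\ud u).
\]
Differentiating once more, and using $m'(y)=\Var_{\mu_{t,y}}(\Theta)$, gives
\[
\partial_y^2\mu_{t,y}(\ud u)=\bigl((u-m)^2-\Var_{\mu_{t,y}}(\Theta)\bigr)\mu_{t,y}(\ud u).
\]
These differentiations are justified on $\mathcal Y_t^{(2)}$ for test functions bounded by $c(1+|T|)(1+u^2)$. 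The justification is dominated convergence, using the locally uniform moment bounds that define $\mathcal Y_t^{(2)}$.

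\emph{Step 2: the chain rule.} Since $\Lambda_t(y(t,z))=z$, we have
\[
\partial_z y=1/\Lambda_t'(y)=1/C(y),\qquad C(y):=\Cov_{\mu_{t,y}}(T(\Theta),\Theta),
\]
with $C>0$ as shown in Section~\ref{sec_cov}. This immediately gives the first formula in \eqref{eq_state_derivatives}. For the second,
\[
\partial_z^2\mu=\frac{\partial_y^2\mu}{C^2}-\frac{C'(y)}{C^3}\,\partial_y\mu .
\]
Then compute $C'(y)$. Since
\[
C(y)=\int_S (T(u)-z(y))\,(u-m(y))\,\mu_{t,y}(\ud u),
\]
differentiating in $y$ produces three terms. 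The terms in which $z'$ or $m'$ hit the integrand vanish, because $\int_S(u-m)\,\ud\mu=0$ and $\int_S(T-z)\,\ud\mu=0$. The remaining term is
\[
C'(y)=\E\bigl[(T-z)(\Theta-m)^2\bigr].
\]
Substituting this into the chain-rule identity yields exactly the stated $Q_z$.

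\emph{Step 3: convexity and the orthogonality relations.} $Q_z$ is a quadratic polynomial in $u$ with leading coefficient $1/C^2>0$, plus an affine term. It is therefore convex; this is the easy part. For \eqref{eq_Q}, differentiate the identities
\[
\int_S\mu_{t,y(t,z)}(\ud u)=1,\qquad \int_S T\,\ud\mu_{t,y(t,z)}=z
\]
twice in $z$. The first is constant and the second is affine in $z$, so both second derivatives are zero. This is legitimate because Step 1 permits interchanging $\partial_z^2$ with integration against $1$ and against $T$. One can also check \eqref{eq_Q} directly: the first identity holds because both pieces of $Q_z$ have zero mean. For the second, the $T$-integral of the first piece is $C'/C^2$ and that of the second piece is $C'\cdot C/C^3$, so they cancel.

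\emph{Main obstacle.} The only real difficulty is justifying the second differentiation under the integral sign and the continuity of $y(t,\cdot)$. This requires $\int_S |T|u^2\,\ud\mu_{t,y}$ to be locally uniformly finite. I would obtain this by bounding $\e^{uy}$ on a compact $y$-interval by its values at the endpoints, which lie inside $\mathcal Y_t^{(2)}$ since that set is open. The algebra itself is routine.
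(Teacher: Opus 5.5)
Your proposal is correct and follows essentially the paper's route. You use the chain rule through $\partial_z y=1/\Cov_{\mu_{t,y}}(T(\Theta),\Theta)$ and differentiate this covariance, where the two cross terms vanish because $\int_S(u-\E_{\mu_{t,y}}[\Theta])\,\mu_{t,y}(\ud u)=\int_S(T(u)-z)\,\mu_{t,y}(\ud u)=0$; convexity comes from the positive leading coefficient $1/\Cov^2$, and \eqref{eq_Q} from differentiating the normalization and $\int_S T\,\ud\mu_{t,z}=z$ twice, exactly as in the paper. Organizing the computation through your identity $\partial_z^2=C^{-2}\partial_y^2-C'C^{-3}\partial_y$ is a cosmetic difference, and your direct algebraic check of \eqref{eq_Q} is a useful consistency verification that the paper omits.
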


\begin{proof}
The map $z\mapsto y$ is twice continuously differentiable. Since $\partial_y \Lambda_t(y) = \Cov_{\mu_{t,y}}(T(\Theta),\Theta)$, its first derivative is $1/\Cov_{\mu_{t,y}}(T(\Theta),\Theta)$; differentiating \eqref{eq_t} therefore gives the first identity in \eqref{eq_state_derivatives}. Moreover,
\[\frac{\partial}{\partial z}\E_{\mu_{t,y}}[\Theta]=\int_S u 
\frac{u-\E_{\mu_{t,y}}[\Theta]}{\Cov_{\mu_{t,y}}(T(\Theta),\Theta)} \mu_{t,y}(\ud u)=\frac{\Var_{\mu_{t,y}}(\Theta)}{\Cov_{\mu_{t,y}}(T(\Theta),\Theta)},\]
and
\begin{align*}
\frac{\partial}{\partial z}\Cov_{\mu_{t,y}}(T(\Theta),\Theta)&=
\frac{\E_{\mu_{t,y}}\left[(T(\Theta)-z)(\Theta-\E_{\mu_{t,y}}[\Theta])^2\right]}{\Cov_{\mu_{t,y}}(T(\Theta),\Theta)}\\
&\quad-\E_{\mu_{t,y}}\left[\Theta-\E_{\mu_{t,y}}[\Theta]
\right]-\frac{\ud}{\ud z}\E_{\mu_{t,y}}[\Theta] \E_{\mu_{t,y}}[T(\Theta)-z]\\
&=\frac{\E_{\mu_{t,y}}\left[(T(\Theta)-z)(\Theta-\E_{\mu_{t,y}}[\Theta])^2\right]}{\Cov_{\mu_{t,y}}(T(\Theta),\Theta)}
\end{align*}
since we are on the $z$-level curve. A second differentiation gives the second identity in \eqref{eq_state_derivatives} and the stated formula for $Q_z$. This covariance is positive, so $Q_z$ is convex because its leading quadratic coefficient is positive. Finally, differentiating twice the normalization in \eqref{eq_t} and the identity $\int_ST(u)\mu_{t,z}(\ud u)=z$ gives \eqref{eq_Q}.
\end{proof}
\begin{theorem}\label{thm_convex}
Fix $t\geq0$ and an integer $k\geq1$, and let $J$ be a nonempty open interval contained in $\mathcal Z_t^{(2)}$. If $f:\R\to\R$ is finite and convex and $\mathcal K_{t,k}^T|f|(z)<\infty$ for $z\in J$, then the map $z\mapsto\mathcal K_{t,k}^Tf(z)$ is convex on $J$.
\end{theorem}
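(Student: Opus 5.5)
Fix $t$, $k$, a convex $f$ and the interval $J$. The plan is to reduce first to call payoffs, and then to run a supporting (``touching'') argument with a frozen event, exactly parallel to the proof of Theorem~\ref{thm1} but with the state derivative $Q_z$ in place of the time derivative $G_t$.

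\textbf{Reduction to calls.} A finite convex $f$ is locally an affine function plus a mixture $\int c_a\,\nu_f(\ud a)$ of calls $c_a(w)=(w-a)^+$ (and puts, which equal calls minus affine functions). By \eqref{eq_kernel_mean}, $\mathcal K_{t,k}^T$ maps affine functions of $w$ to the same affine functions of $z$. It therefore suffices to show that $z\mapsto\mathcal K_{t,k}^Tc_a(z)$ is convex on $J$ for each $a$. The general case then follows by monotone convergence, approximating $f$ by finite sums of calls and puts, and using $\mathcal K_{t,k}^T|f|<\infty$ for integrability.

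\textbf{Frozen-event supporting functional.} Fix $a$ and a point $z_0\in J$. Since $M_{t,z_0}^{(k)}$ is non-decreasing in $x$, the set $\{x:M_{t,z_0}^{(k)}(x)>a\}$ agrees a.e.\ with an upper Borel set $A$. Put $q(u):=P_u^{(k)}(A)$, which is non-decreasing with values in $[0,1]$. Define
\[
H(z):=\int_S(T(u)-a)q(u)\mu_{t,z}(\ud u),\quad z\in J.
\]
By \eqref{eq_frozen_event}, $H(z)\le\mathcal K_{t,k}^Tc_a(z)$ for every $z$, since $A$ is merely one admissible event, while $H(z_0)=\mathcal K_{t,k}^Tc_a(z_0)$. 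If $H$ is convex on $J$, then its tangent line at $z_0$ is an affine minorant of $\mathcal K_{t,k}^Tc_a$ touching at $z_0$. A function on an open interval having a supporting affine minorant at every point is convex, because it is the supremum of those affine functions. So it remains to show $H''\ge0$.

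\textbf{Curvature sign.} Using Lemma~\ref{lem_2nd_deriv}, and differentiating under the integral (justified on $\mathcal Z_t^{(2)}$ by the locally uniform moment bounds, since $q$ is bounded and the derivatives involve $(1+|T|)(1+u^2)$ weights), we get
\[
H''(z)=\int_S(T(u)-a)q(u)Q_z(u)\mu_{t,z}(\ud u).
\]
$Q_z$ is convex and satisfies \eqref{eq_Q}. Lemma~\ref{lem_frozen} with $\psi=Q_z$ then gives $H''(z)\ge0$, provided $a\in[z^-(z),z^+(z)]$ for the measure $\mu_{t,z}$.

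\textbf{Main obstacle.} The difficulty is that this threshold condition is only guaranteed at $z_0$, where maximality of $A$ gives $z^-(z_0)\le a\le z^+(z_0)$, and not along the whole interval. Two fixes are available:
\begin{itemize}
\item[(i)] Use only local information. Convexity follows if $H$ is a local minorant touching at $z_0$ with $H''(z_0)\ge0$ and the remainder is controlled. More cleanly, it suffices that $\mathcal K_{t,k}^Tc_a$ is $C^2$ or has nonnegative lower second difference quotients at each $z_0$. Since $H\le\mathcal K c_a$ with equality at $z_0$, we have $\liminf_h[\mathcal K c_a(z_0+h)+\mathcal K c_a(z_0-h)-2\mathcal K c_a(z_0)]/h^2\ge H''(z_0)\ge0$. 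Continuity of $\mathcal K c_a$, which follows from continuity of $y(t,\cdot)$ and dominated convergence, then yields convexity by the standard lemma that a continuous function with nonnegative lower second symmetric derivative is convex.
\item[(ii)] Show that $z^\pm(z)$ move monotonically in $z$ so as to keep $a$ inside, as in the proof of Theorem~\ref{thm1}.
\end{itemize}
I would pursue (i), since it needs the sign only at $z_0$. This sidesteps the lack of a monotone bracketing in $z$, which seems unlikely to hold because $z^\pm$ both increase with $z$.
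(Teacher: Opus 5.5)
Your route (i) is correct and is essentially the paper's proof. The frozen superlevel event $A$ gives a minorant $H_A$ of $\mathcal K_{t,k}^Tc_a$ touching at $z_0$, Lemma~\ref{lem_frozen} with $\psi=Q_{z_0}$ gives $H_A''(z_0)\geq 0$ (with $H_A$ affine in the degenerate cases $\int_S q\,\ud\mu_{t,z_0}\in\{0,1\}$, which you should state), and the paper's $\varepsilon(z-z_1)(z_2-z)$ perturbation is exactly the proof of the lower-second-symmetric-derivative criterion you cite; the remaining differences are cosmetic, namely that you get continuity of the envelope by dominated convergence instead of the paper's event-uniform Lipschitz bound, and your call--put mixture step matches the paper's.
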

\begin{proof}
Fix $a\in\R$ and write $c_a(w):=(w-a)^+$. Set $V_a(z):=\mathcal K_{t,k}^Tc_a(z)$. For every $A\in\mathcal B(\R)$, define
\[q_A(u):=P_u^{(k)}(A),\quad H_A(z):=\int_S(T(u)-a)q_A(u)\mu_{t,z}(\ud u).\]
By \eqref{eq_frozen_event}, $H_A$ is also the event integral on the left-hand side of that identity. Consequently,
\[V_a(z)=\sup_{A\in\mathcal B(\R)}H_A(z),\]
and the supremum is attained, up to predictive null sets, by
$\{x:M_{t,z}^{(k)}(x)>a\}$. We first verify the continuity needed in the supporting-function argument. Let $J_0$ be a compact interval contained in $J$, and write
\[\bar u(z):=\int_Su\mu_{t,z}(\ud u),
\quad\gamma(z):=\Cov_{\mu_{t,z}}(T(\Theta),\Theta)>0.
\]
For every Borel set $A$, the moment bounds defining $\mathcal Y_t^{(2)}$ justify differentiation under the integral, and Lemma~\ref{lem_2nd_deriv} gives
\[H_A'(z)=\frac{1}{\gamma(z)}
\int_S(T(u)-a)q_A(u)(u-\bar u(z))\mu_{t,z}(\ud u).
\]
Since $0\leq q_A\leq1$,
\begin{equation}\label{eq_uniform_lipschitz}
|H_A'(z)|\leq\frac{\int_S|T(u)-a| |u-\bar u(z)|\mu_{t,z}(\ud u)}{\gamma(z)}.
\end{equation}
The right-hand side is independent of $A$ and is bounded on $J_0$: the regular domain gives locally uniform moment bounds, while the continuous, strictly positive function $\gamma$ is bounded away from zero on $J_0$. Denote this bound by $L_{J_0,a}$. Since $V_a(z)=\sup_{A\in\mathcal B(\R)}H_A(z)$, for $z,z'\in J_0$,
\[|V_a(z)-V_a(z')|\leq\sup_{A\in\mathcal B(\R)}|H_A(z)-H_A(z')|\leq L_{J_0,a}|z-z'|.\]
Thus $V_a$ is locally Lipschitz on $J$. Notice that the constant is uniform over the events in the envelope; no differentiation of the supremum is being used.

We next construct a supporting functional at an arbitrary $z_0\in J$. We have $M_{t,z_0}^{(k)}$ non-decreasing on the $P_{u_*}^{(k)}$-full set where \eqref{eq_discrete_update} has its ratio form. Choose an upper Borel set $A_0$ that agrees $P_{u_*}^{(k)}$-almost everywhere with $\{x:M_{t,z_0}^{(k)}(x)>a\}$, and put $q:=q_{A_0}$. The monotone-likelihood-ratio property implies that $q$ is non-decreasing. Moreover,
\[H_{A_0}(z)\leq V_a(z),\quad z\in J,\quad H_{A_0}(z_0)=V_a(z_0).\]
Taking the second derivative at $z_0$ gives
\[H_{A_0}''(z_0)=\int_S(T(u)-a)q(u)Q_{z_0}(u)\mu_{t,z_0}(\ud u).\]
Let $p_0:=\int_Sq(u)\mu_{t,z_0}(\ud u)$. If $p_0=0$, equivalence of the exponentially tilted posteriors implies that $q=0$ almost everywhere under every $\mu_{t,z}$, so $H_{A_0}\equiv0$. If $p_0=1$, the same equivalence gives $q=1$ almost everywhere and $H_{A_0}(z)=z-a$. Hence $H_{A_0}''(z_0)=0$ in either case.
Suppose now that $0<p_0<1$. By the choice of $A_0$ and \eqref{eq_frozen_event}, $a$ lies between the two conditional $T$-means as in Lemma \ref{lem_frozen}, with $(\mu,Q)=(\mu_{t,z_0},p_0)$. By \eqref{eq_Q}, the convex function $Q_{z_0}$ satisfies the orthogonality conditions in Lemma~\ref{lem_frozen}. Applying that lemma with $\psi=Q_{z_0}$ to the second derivative identity above gives $H_{A_0}''(z_0)\geq0$. We have therefore found, at every $z_0\in J$, a twice differentiable function $H_{A_0}$ that touches $V_a$ from below at $z_0$ and has non-negative second derivative there.

Suppose that $V_a$ is not convex. Then there are $z_1<z_2$ in $J$ and $z\in(z_1,z_2)$ such that $V_a(z)>l(z)$, where
\[
l(z):=\frac{z_2-z}{z_2-z_1}V_a(z_1)
+\frac{z-z_1}{z_2-z_1}V_a(z_2).
\]
Choose $\varepsilon>0$ sufficiently small that
\[z\mapsto V_a(z)-l(z)-\varepsilon(z-z_1)(z_2-z)\]
attains a positive maximum $m_0$ at some $\bar z\in(z_1,z_2)$, and define
$\varphi(z):=l(z)+m_0+\varepsilon(z-z_1)(z_2-z)$. Then $V_a\leq\varphi$ on $[z_1,z_2]$, with equality at $\bar z$, and $\varphi''(\bar z)=-2\varepsilon<0$. Let $H_{A_0}$ be the supporting functional constructed at $z_0=\bar z$. Since $H_{A_0}\leq V_a\leq\varphi$ and equality holds at $\bar z$, the function $\varphi-H_{A_0}$ has a local minimum there. Consequently, $\varphi''(\bar z)\geq H_{A_0}''(\bar z)\geq 0$, a contradiction. Hence $V_a$ is convex.

Finally, let $\ell$ be an affine supporting function of $f$. Then $f-\ell\geq0$ has the standard representation as a positive mixture of call and put functions; see Theorem~A.3.1 in \cite{SS}. Put functions differ from call functions by an affine function, and the posterior operator preserves affine functions by \eqref{eq_kernel_mean}. Tonelli's theorem and the assumption $\mathcal K_{t,k}^T|f|<\infty$ therefore extend the convexity conclusion from calls to $f$.
\end{proof}
Theorem~\ref{thm_convex} establishes convexity preservation directly from the Bayesian posterior structure. Related properties for parabolic and jump-diffusion operators have been studied extensively; see \cite{JT,ET}. In the Brownian model below, $\Pi^T$ is itself a one-dimensional diffusion, so there is a direct connection with this literature. The present proof also applies in discrete time without requiring generator conditions.

The extension discussed in Remark~\ref{rem_information} also applies to the convexity-preservation result. We retain the i.i.d.\ exponential-family formulation because it has a simple structure and is natural for the intended applications.

The two main theorems play different roles. Theorem~\ref{thm1} compares a fixed future block of observations for every convex terminal value. Theorem~\ref{thm_convex} shows that posterior updating preserves convexity, so continuation values generated by the Bellman equation remain convex and can be used at earlier times. Together, these results yield time-monotonicity properties for stopping problems. In a control problem, one must additionally check that the control does not alter the law of future observations and that optimization preserves the required convexity.

\section{Examples and the continuous-time case}\label{sec_eg}
\subsection{Examples in discrete time}\label{sec_examples}
We give three examples illustrating different choices of the posterior functional $T$. In each example, the prior is arbitrary subject to the standing assumptions.
\begin{example}\label{eg1}
Fix $\theta_0\in S$ and let $T(u)=\mathbf{1}_{(\theta_0,\infty)}(u)$. Then 
\[\Pi_n^T
=\E\left[\mathbf{1}_{\{\Theta>\theta_0\}}\vert\mathcal F_n\right]
=\P(\Theta>\theta_0\vert\mathcal F_n).\]
The process $\Pi_n^T$ is precisely the posterior probability process in the Bayesian testing problem of \cite{EW}. The function $T$ is bounded and non-decreasing, and it is nonconstant whenever $0<\mu((\theta_0,\infty))<1$. Thus, under the standing assumptions, this posterior probability is a special case of the posterior functionals considered here. In particular, Theorem~\ref{thm1} verifies Assumption~5.1 of \cite{EW} and thereby proves their Conjecture~6.1.
\end{example}

\begin{example}\label{eg2}
Let $T(u)=u$. Then $\Pi_n^T=\hat\Theta_n:=\E[\Theta\vert\mathcal F_n]$ is the posterior mean of the unknown parameter. Consider an investment problem in which the decision maker learns about the unknown profitability $\Theta$ and chooses both when to stop learning and whether to undertake a project, with discount rate $r>0$. If she stops at $\tau$ and chooses $D_\tau\in\{0,1\}$, where $D_\tau=1$ denotes investment, her payoff is $D_\tau(\Theta-K)$. Conditioning on $\mathcal F_\tau$ gives
\[\E[D_\tau(\Theta-K)\vert\mathcal F_\tau]=D_\tau(\hat\Theta_\tau-K).\]
The optimal investment decision at time $\tau$ is therefore $D_\tau^*=\ind_{\{\hat\Theta_\tau>K\}}$, and the resulting payoff is $(\hat\Theta_\tau-K)^+$. The investment problem can thus be written as the optimal stopping problem
\[V(n,z)=\sup_{\tau\geq n}\E_{n,z}\left[e^{-r(\tau-n)}(\hat\Theta_\tau-K)^+\right].\]
By Remark~\ref{rem_markov}, $(n,\hat\Theta_n)$ forms a Markov state. Let $g(x):=(x-K)^+$. First consider the problem with at most $j$ future observations. Set $V_0(n,x)=g(x)$ and define recursively
\[V_{j+1}(n,x)=\max\left\{g(x),e^{-r}\mathcal K_{n,1}^T V_j(n+1,\cdot)(x)\right\}.\]
Since $g$ is convex, Theorem~\ref{thm_convex} and induction show that $x\mapsto V_j(n,x)$ is convex for every $n$ and $j$. Moreover, for $m\leq n$, Theorem~\ref{thm1} gives
\[\mathcal K_{m,1}^T V_j(m+1,\cdot)(x)\geq\mathcal K_{n,1}^T V_j(m+1,\cdot)(x)\geq \mathcal K_{n,1}^T V_j(n+1,\cdot)(x),\]
and hence $V_{j+1}(m,x)\geq V_{j+1}(n,x)$. Under the usual integrability conditions, letting $j\to\infty$ yields $V(m,x)\geq V(n,x)$ for $m\leq n$. Thus, conditional on the same current posterior mean, the option to continue learning before making the investment decision is more valuable when fewer observations have been collected.
\end{example}
\begin{example}\label{eg3}
Let $T(u)=B'(u)$. Recall that $B'(u)=\E_u[X_1]$ and $B''(u)=\Var_u(X_1)\geq0$, so $T$ is non-decreasing. Moreover,
\[\Pi_n^T=\E[B'(\Theta)\vert\mathcal F_n]=\E[X_{n+1}\vert\mathcal F_n],\]
so $\Pi_n^T$ is the predicted mean of the next observation. For example, for Poisson observations the intensity parameter is $\lambda=\e^u$. Here $B(u)=\e^u$, $T(u)=B'(u)=\e^u$, and $\Pi_n^T=\E[\e^\Theta\vert\mathcal F_n]=\E[X_{n+1}\vert\mathcal F_n]$ is the predicted future count under the current posterior. Theorem~\ref{thm1} then implies that these forecasts stabilize as information accumulates, even when conditioning on the same current forecast.
\end{example}
Another useful class is obtained by taking $T(u):=\E_u[h(X_1)]$ for a bounded non-decreasing function $h$; then $\Pi_n^T=\E[h(X_{n+1})\vert\mathcal F_n]$. An example of this form appears in the continuous-time section; see Example~\ref{eg1_BM}. The next example shows that the monotonicity assumption on $T$ cannot be dropped from Theorem~\ref{thm1}, even with a three-point prior and Bernoulli observations. Without this assumption, the convex-order conclusion can fail.
\begin{example}\label{eg_counter}
Let the prior distribution be $\mu=\frac14\delta_{u_0}+\frac12\delta_{u_1}+\frac14\delta_{u_2}$, and take $T(u_0)=T(u_2)=0$ and $T(u_1)=1$. Choose the natural parameters $u_0,u_1,u_2$ to correspond to the success probabilities $p_0=\frac14$, $p_1=\frac12$, and $p_2=\frac34$, respectively. Then $\Pi_n^T$ is the posterior probability $\P(\Theta=u_1\vert\mathcal F_n)$, and $T$ is not monotone. We have $\Pi_0^T=\frac12$. If $X_1=1$, the posterior becomes $\mu_{1,1}=\frac18\delta_{u_0}+\frac12\delta_{u_1}+\frac38\delta_{u_2}$, whereas if $X_1=0$, it becomes $\mu_{1,0}=\frac38\delta_{u_0}+\frac12\delta_{u_1}+\frac18\delta_{u_2}$. Thus $\P(\Pi_1^T=\frac12\vert\Pi_0^T=\frac12)=1$. Starting from $\mu_{1,1}$, for example, $X_2=1$ with probability $\frac{9}{16}$, in which case $\Pi_2^T=\frac49$; if $X_2=0$, then $\Pi_2^T=\frac47$. Therefore, the conditional distribution of $\Pi_2^T$ given $\Pi_1^T=\frac12$ is
\[\frac{9}{16}\delta_{\frac49}+\frac{7}{16}\delta_{\frac47}\geq_{\cx}\delta_{\frac12}.\]
Thus the convex order is the reverse of that in Theorem~\ref{thm1}.
\end{example}
The monotonicity of $T$ is used in two places. First, it makes $y\mapsto\Lambda_t(y)$ strictly increasing. Hence, for fixed $t$, the current value $z=\Pi_t^T$ uniquely determines the sufficient statistic $y$ and therefore the posterior $\mu_{t,y}$. Without monotonicity, a single $z$-level can contain distinct values $y_1$ and $y_2$, corresponding to different posterior distributions $\mu_{t,y_1}$ and $\mu_{t,y_2}$. Second, the function $q$ used above may fail to be non-decreasing, in which case Lemma~\ref{lem_frozen} cannot be applied.
\subsection{Exponential families of L\'evy processes}\label{sec_continuous}
The interpolation in \eqref{eq_t} has a genuine continuous-time realization for certain exponential families; see \cite{KS}. We make the construction and the admissible state domain precise before stating the continuous-time version of the two main results.

Recall the one-period observation law $P_u(\ud x)=\e^{ux-B(u)}\nu(\ud x)$. Suppose that $P_{u_0}$ is nondegenerate and infinitely divisible for some $u_0\in N^\circ$. Let $L=(L_r)_{r\geq0}$ be a L\'evy process under $\P_{u_0}$ with $L_1\sim P_{u_0}$, and let $\mathcal F_r^L$ denote its raw natural filtration. For $u\in N^\circ$, define the Esscher law $\P_u$ by
\begin{equation}\label{eq_esscher_family}
\left.
\frac{\ud\P_u}{\ud\P_{u_0}}
\right|_{\mathcal F_r^L}
=
\exp\!\left\{(u-u_0)L_r-r(B(u)-B(u_0))\right\},
\quad r\geq0.
\end{equation}
Let $\Theta$ have prior distribution $\mu$, supported on $S\subset N^\circ$, and, conditionally on $\Theta=u$, let $L$ have law $\P_u$. As before, let $T:S\to\R$ be non-decreasing, integrable under $\mu$, and not $\mu$-a.s.\ constant. For $r\geq 0$, define $\Pi_r^T:=\E[T(\Theta)\vert\mathcal F_r^L]$, and define the regular posterior domain
\begin{equation}\label{eq_regular_domain}
\mathscr D_T:=\operatorname{int}_{[0,\infty)\times\R}\left\{
(r,y):\int_S(1+|T(u)|)(1+u^2+|B(u)|)\e^{uy-rB(u)}\mu(\ud u)<\infty\right\},
\end{equation}
where the interior is taken relative to $[0,\infty)\times\R$. Let $\mathscr Y_r:=\{y:(r,y)\in\mathscr D_T\}$ and $\mathscr Z_r:=\Lambda_r(\mathscr Y_r)$. Since $y\mapsto\Lambda_r(y)$ is strictly increasing, every $z\in\mathscr Z_r$ determines a unique $y(r,z)\in\mathscr Y_r$. Moreover, $\mathscr Y_r\subseteq\mathcal Y_r^{(2)}\subseteq\mathcal Y_r$ and $\mathscr Z_r\subseteq\mathcal Z_r^{(2)}$. Thus this is the same inverse and the same state-indexed posterior $\mu_{r,z}$ defined before. For $h>0$, let $Q_u^h$ denote the law of $L_h$ under $\P_u$. For an increment of length $h$, define 
\begin{align*}
    \ell_h(u,x)&:=\frac{\ud Q_u^h}{\ud Q_{u_0}^h}(x)
=\exp\!\left\{(u-u_0)x-h\bigl(B(u)-B(u_0)\bigr)\right\}\\
\overline Q_{r,z}^h(\ud x)&:=\int_S Q_u^h(\ud x)\mu_{r,z}(\ud u),\quad z\in\mathscr Z_r.
\end{align*}
For $\overline Q_{r,z}^h$-almost every $x$, the posterior mean of $T$ after observing the increment $x$ is
\[M_{r,z}^{(h)}(x):=\frac{
\displaystyle\int_S T(u)\ell_h(u,x)\mu_{r,z}(\ud u)}{\displaystyle\int_S\ell_h(u,x)\mu_{r,z}(\ud u)}=\Lambda_{r+h}(y(r,z)+x).\]
Here the last expression is understood wherever $\Lambda_{r+h}$ is defined; this domain contains $y(r,z)+x$ for $\overline Q_{r,z}^h$-almost every $x$. Set $M_{r,z}^{(h)}(x)=z$ on the exceptional null set. For $A\in\mathcal B(\R)$, define
\begin{equation}\label{eq_ct_kernel}
K_{r,h}^T(z,A):=\int_{\R}\ind_A\!\left(M_{r,z}^{(h)}(x)\right)\overline Q_{r,z}^h(\ud x),\quad z\in\mathscr Z_r,
\end{equation}
and let $\mathcal K_{r,h}^Tf(z):=\int_{\R}f(w)K_{r,h}^T(z,\ud w)$. For each fixed $r$, the inverse map $z\mapsto y(r,z)$ is continuous on $\mathscr Z_r$, and the numerator and denominator in the update formula are jointly measurable in $(z,x)$. Thus \eqref{eq_ct_kernel} is a Borel transition kernel. Its value is independent of the convention on the exceptional null set, so it provides a canonical definition of the transition law even when $\{\Pi_r^T=z\}$ has probability zero.

\begin{proposition}[Esscher--L\'evy posterior kernels]\label{prop_esscher_levy}
Under the preceding construction, $L_r$ is a sufficient statistic for $\Theta$. For $r\geq 0$ and for every Borel set $A\subseteq S$, $\P(\Theta\in A\vert\mathcal F_r^L)=\mu_{r,L_r}(A)$ a.s., and consequently $\Pi_r^T = \Lambda_r(L_r)$ almost surely. Fix $0\leq t<s$ and $h>0$, and let $J\subseteq\bigcap_{r\in[t,s]}\mathscr Z_r$ be a nonempty open interval. Suppose that, for every compact interval $J_0\subset J$, the set $\{(r,y(r,z)):r\in[t,s],\ z\in J_0\}$ has compact closure contained in $\mathscr D_T$, defined in \eqref{eq_regular_domain}. Then:
\begin{enumerate}[(i)]
\item For every $z\in J$, $K_{t,h}^T(z,\cdot)\geq_{\cx}K_{s,h}^T(z,\cdot)$. Equivalently, $\mathcal K_{t,h}^Tf(z)\geq\mathcal K_{s,h}^Tf(z)$ for every convex function $f$ for which both sides are finite.
\item For each $r\in[t,s]$ and every finite convex function
$f:\R\to\R$ satisfying $\mathcal K_{r,h}^T|f|(z)<\infty$ for $z\in J$,
the map $z\mapsto\mathcal K_{r,h}^Tf(z)$ is convex on $J$.
\end{enumerate}
\end{proposition}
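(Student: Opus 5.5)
The plan has three parts. First I establish sufficiency and the posterior formula directly from the Esscher density \eqref{eq_esscher_family}. Then I observe that the continuous-time kernels \eqref{eq_ct_kernel} have exactly the same structure as the discrete kernels \eqref{eq_discrete_kernel}, with the integer $k$ replaced by the real increment length $h$. Finally I rerun the proofs of Theorem~\ref{thm1} and Theorem~\ref{thm_convex} verbatim, checking only the integrability and measurability inputs.

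\emph{Sufficiency and the posterior.} Under $\P_{u_0}$, the joint law of $(\Theta,L)$ restricted to $\mathcal F_r^L$ has density $\exp\{(u-u_0)L_r-r(B(u)-B(u_0))\}$ with respect to $\mu\otimes\P_{u_0}|_{\mathcal F_r^L}$. This density depends on the path only through $L_r$, so the factorization theorem gives sufficiency. The abstract Bayes formula then gives $\P(\Theta\in A\vert\mathcal F_r^L)=\mu_{r,L_r}(A)$, once I check that $L_r\in\mathcal Y_r$ a.s. For that I apply Tonelli to $\E\int_S(1+|T|)\e^{uL_r-rB(u)}\mu(\ud u)/(\text{normalizer})$, exactly as at integer times. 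Integrating $T$ against this posterior yields $\Pi_r^T=\Lambda_r(L_r)$. By the stationarity and independence of Lévy increments under each $\P_u$, conditionally on $\Theta=u$ the increment $L_{r+h}-L_r$ has law $Q_u^h$ and is independent of $\mathcal F_r^L$. Hence $M_{r,z}^{(h)}(x)=\Lambda_{r+h}(y(r,z)+x)$, and $K_{r,h}^T$ is the transition law, with the kernel mean identity and the frozen-event identity \eqref{eq_frozen_event} holding with $P_u^{(k)}$ replaced by $Q_u^h$. Moreover, $Q_u^h$ is an exponential family in $u$ with cumulant $hB(u)$, so it has monotone likelihood ratio in $u$. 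This gives the two structural facts both proofs use: $q(u)=Q_u^h(A)$ is non-decreasing for upper sets $A$, and $x\mapsto M_{r,z}^{(h)}(x)$ is non-decreasing a.e. All $Q_u^h$ are mutually equivalent.

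\emph{Part (i).} I follow the proof of Theorem~\ref{thm1} with $k$ replaced by $h$. I freeze the upper set $A$ from time $s$, set $H(r)=\int_S(T-a)q\,\ud\mu_{r,z}$, and show $H'\le0$ via Lemma~\ref{lem_frozen} with $\psi=G_r$, using the orthogonality \eqref{eq_fix} and the envelope inequality $H(t)\le\mathcal K_{t,h}^Tc_a(z)$. The only new input is the differentiability of $r\mapsto\mu_{r,z}$ along the level curve. That proof used \eqref{assump_1} at four corners of a rectangle; here I replace it by the hypothesis that the compact closure of $\{(r,y(r,z))\}$ lies in the open set $\mathscr D_T$. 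I cover this compact set by finitely many small rectangles whose corners lie in $\mathscr D_T$. Because $(t,y)\mapsto uy-tB(u)$ is affine, the weight $(1+|T|)(1+u^2+|B|)\e^{uy-rB(u)}$ on each small rectangle is dominated by the sum of its values at the four corners. This gives local dominated convergence, so the implicit function theorem yields a $C^1$ level curve $r\mapsto y(r,z)$ and the derivative formula $\int fG_r\,\ud\mu_r$ on $[t,s]$.

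\emph{Part (ii).} I repeat the proof of Theorem~\ref{thm_convex}, using the inclusion $\mathscr Z_r\subseteq\mathcal Z_r^{(2)}$ noted above. Lemma~\ref{lem_2nd_deriv} depends only on $\mu_{r,y}$ and not on the integrality of $r$, so it applies unchanged. The uniform Lipschitz bound \eqref{eq_uniform_lipschitz} uses the locally uniform second moments, which the compactness hypothesis on $J_0$ supplies. The touching argument uses Lemma~\ref{lem_frozen} with $\psi=Q_{z_0}$, and then extends from calls to general convex $f$ via the call--put representation.

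The main obstacle is the localization step: producing the uniform domination needed to differentiate under the integral along the level curve and in $z$ from the compact-closure hypothesis alone, rather than from the explicit four-corner condition. I would handle it by the finite-cover argument combined with the affine-maximum observation, which also gives continuity of $\gamma$ bounded away from zero on $J_0$.
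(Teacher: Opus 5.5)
Your proposal is correct and follows essentially the paper's route. You identify the posterior and the Esscher likelihood $\ell_h$, use the monotone-likelihood-ratio structure of $Q_u^h$ to make $q$ and $M_{r,z}^{(h)}$ non-decreasing, and rerun the proofs of Theorems~\ref{thm1} and~\ref{thm_convex} with $P_u^{(k)}$ replaced by $Q_u^h$. Your finite cover by rectangles with corner domination makes explicit the localization that the paper compresses into the remark that $\mathscr D_T$ supplies the needed domination. The paper additionally checks that the Esscher density is a $\P_{u_0}$-martingale, so that $L$ remains L\'evy under each $\P_u$; you take this as part of the construction.
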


\begin{proof}
For $u\in N^\circ$, let $D_r^u$ denote the density on the right-hand side of \eqref{eq_esscher_family}.
The L\'evy exponential-moment identity gives, for $0\leq \ell\leq r$,
\[
\E_{u_0}[D_r^u\vert\mathcal F_\ell^L]
=D_\ell^u\E_{u_0}\!\left[
\e^{(u-u_0)(L_r-L_\ell)-(r-\ell)(B(u)-B(u_0))}
\right]
=D_\ell^u.
\]
Thus the density in \eqref{eq_esscher_family} is a martingale and defines a consistent family $(\P_u)_{u\in N^\circ}$ under which $L$ remains a L\'evy process. At time one, the Esscher formula and the definition of $P_u$ show that $L_1$ has law $P_u$ under $\P_u$.
Hence, under $\P_u$, $L_n$ has the same law as the discrete sufficient statistic $Y_n$ conditional on $\Theta=u$, for every integer $n\geq1$. Consequently, when $h=k$ is an integer, $Q_u^k=P_u^{(k)}$ and the continuous-time kernel \eqref{eq_ct_kernel} agrees with the discrete kernel \eqref{eq_discrete_kernel} on $\mathscr Z_r$.

Bayes' formula and \eqref{eq_t} give the posterior identity stated above. Taking $r=h$ in \eqref{eq_esscher_family} and pushing the resulting measure forward under $L_h$ gives the likelihood $\ell_h$; stationary independent increments make this the likelihood ratio for any future increment of length $h$. Another application of Bayes' formula gives the posterior update above. The Tonelli argument following \eqref{eq_discrete_kernel}, with $(P_u^{(k)},P_{u_*}^{(k)},\ell_k)$ replaced by $(Q_u^h,Q_{u_0}^h,\ell_h)$, shows that the denominator is positive and finite and that the absolute numerator is finite on a common $Q_{u_0}^h$-full set. It also shows that $\overline Q_{r,z}^h$ is equivalent to $Q_{u_0}^h$. The analogue of \eqref{eq_kernel_mean} is $\int_{\R}wK_{r,h}^T(z,\ud w)=\int_ST(u)\mu_{r,z}(\ud u)=z$. From the likelihood ratio above, if $u_1<u_2$, then the likelihood ratio of $Q_{u_2}^h$ to $Q_{u_1}^h$ is increasing in $x$. Thus the future-increment experiment has the same monotone-likelihood-ratio property as a block of discrete observations. All the increment laws $Q_u^h$ are equivalent to $Q_{u_0}^h$. On the common full-measure set where the ratio is defined, $M_{r,z}^{(h)}$ is non-decreasing in $x$. Consequently, each of its superlevel sets agrees $Q_{u_0}^h$-almost everywhere with an upper Borel set. Replacing a superlevel set by this upper set changes neither the kernel nor the probabilities below.

For part~(i), fix $a\in\R$ and write $c_a(w):=(w-a)^+$. At time $s$, choose an upper Borel set $A$ that agrees $Q_{u_0}^h$-almost everywhere with $\{x:M_{s,z}^{(h)}(x)>a\}$, and set $q(u):=Q_u^h(A)$. Apply the argument in the proof of Theorem~\ref{thm1} with a future increment $Y$ whose conditional law is $Q_u^h$. The domain \eqref{eq_regular_domain} supplies the domination needed to differentiate along $r\mapsto\mu_{r,z}$. The preceding likelihood-ratio calculation makes $q$ non-decreasing, while the score $G_r$ is concave and satisfies the two orthogonality relations in \eqref{eq_fix}. Lemma~\ref{lem_frozen} therefore gives
\[
\mathcal K_{t,h}^Tc_a(z)\geq\mathcal K_{s,h}^Tc_a(z).
\]
Together with the common-mean identity above and the call-function characterization \eqref{eq_call_cx}, this proves part (i). When $t=0$, the derivative is understood from the right, and the endpoint follows by continuity.

For part~(ii), fix $r\in[t,s]$ and $a\in\R$. Repeat the construction in the proof of Theorem \ref{thm_convex}, replacing $P_u^{(k)}$ by $Q_u^h$, setting $q_A(u):=Q_u^h(A)$, and replacing $t$ by $r$. The continuous-time analogue of \eqref{eq_frozen_event}, obtained from Bayes' formula, identifies each frozen functional with its event integral and hence gives the envelope. On every compact interval $J_0\subset J$, the compact-closure assumption in the proposition and \eqref{eq_regular_domain} make the derivative estimate \eqref{eq_uniform_lipschitz} uniform over $A$. The envelope is therefore locally Lipschitz on $J$.

Now fix $z_0\in J$ and choose an upper Borel set $A$ that agrees $Q_{u_0}^h$-almost everywhere with $\{x:M_{r,z_0}^{(h)}(x)>a\}$. The monotone-likelihood-ratio property implies that $q_A$ is non-decreasing. If this maximizing event has probability zero or one, its supporting functional is affine. Otherwise, the domain \eqref{eq_regular_domain} permits the two differentiations in Lemma~\ref{lem_2nd_deriv}; the resulting second-derivative density $Q_{z_0}$ is convex and satisfies \eqref{eq_Q}, so Lemma~\ref{lem_frozen} applies. The supporting-function argument in the proof of Theorem~\ref{thm_convex} now proves convexity for call payoffs. The operator preserves affine functions by the common mean identity, the same mixture argument used at the end of that proof extends the conclusion to every $f$ in part~(ii). Thus the map $z\mapsto\mathcal K_{r,h}^Tf(z)$ is convex.
\end{proof}

The Esscher transform also gives an explicit change of the L\'evy triplet. If the L\'evy process under $\P_{u_0}$ has triplet $(b,c,\nu_L)$, then its triplet under $\P_u$ is
\begin{align*}
b^u&=b+c(u-u_0)+\int_{-1\leq x\leq1}x(\e^{(u-u_0)x}-1)\nu_L(\ud x),\\
\nu_L^u(\ud x)&=\e^{(u-u_0)x}\nu_L(\ud x),\quad c^u=c.
\end{align*}
Thus the drift and jump measure change, whereas the diffusion coefficient does not. The framework therefore covers a Brownian motion with unknown drift and fixed variance, a Poisson process with unknown intensity, and a gamma process with an unknown rate parameter, among other examples. It does not cover arbitrary changes to the L\'evy triplet or the general multidimensional state-dependent jump-diffusion models studied in \cite{ET2}. Our setting also differs in that convexity preservation concerns the posterior statistic rather than the observed process.

As a continuous-time example, let $N$ be a Poisson process satisfying $N_t\vert\{\Theta=u\}\sim\operatorname{Poisson}(t\e^u)$. After observing $N$, the posterior is proportional to $\exp\{uN_t-t\e^u\}\mu(\ud u)$. Sequential testing for Poisson processes with a binary prior was studied in \cite{PS}; subsequent work extended this analysis to finitely many simple hypotheses for compound Poisson observations \cite{DPS}. By contrast, our formulation accommodates a general prior on the Poisson intensity and monotone functionals of the unknown parameter, including composite-hypothesis testing with $T(\Theta)=\ind_{\{\Theta>\theta_0\}}$. The formulation also covers compound Poisson processes, but only through a one-parameter family in which the unknown parameter generally changes both the jump intensity and the jump-size distribution.

More generally, let $A:\R_+\to\R_+$ be increasing, with $A(0)=0$, and define the time-changed process $\widetilde N_t:=N_{A(t)}$. Then
\[\widetilde N_t\vert\{\Theta=u\}\sim\operatorname{Poisson}(A(t)\e^u),\]
and its posterior is proportional to $\exp\{u\widetilde N_t-A(t)\e^u\}\mu(\ud u)$. Thus $A(t)$ measures the accumulated information and gives a continuous-time interpretation of the heterogeneous Poisson observations in Remark~\ref{rem_information}. The same idea applies more generally to a deterministic time change of a L\'evy process. The natural comparison is then between intervals that contribute the same amount of information.

Consider a Brownian motion with unknown drift, $Y_t=\Theta t+W_t$, and let $\mathcal F_t^Y$ be its observation filtration. Here $B(u)=u^2/2$. Define $\hat\Theta_t:=\E[\Theta\vert\mathcal F_t^Y]$ and the innovation process $\hat W_t:=Y_t-\int_0^t\hat\Theta_r \ud r$. It\^o's formula gives
\[\ud\Pi_t^T=\Cov(T(\Theta),\Theta\vert\mathcal F_t^Y) \ud\hat W_t.\]
Indeed, the diffusion coefficient is $\partial_y\Lambda_t(Y_t)=\Cov(T(\Theta),\Theta\vert\mathcal F_t^Y)$, whereas the drift vanishes because $\Pi^T$ is a martingale. By Corollary~\ref{cor_cov_monot},
the diffusion coefficient of the $\Pi^T$ process decreases in time along each level curve. Two applications are immediate. First, if $T(u)=u$, then $\Cov_{\mu_{t,z}}(T(\Theta),\Theta)=\Var_{\mu_{t,z}}(\Theta)$ and $\Var_{\mu_{t,z}}(\Theta)\geq\Var_{\mu_{s,z}}(\Theta)$ for $t<s$. This recovers the monotonicity of the diffusion coefficient in Proposition~2.5 of \cite{EKV} and Proposition~3.6 of \cite{EV2}. Our result does not, however, fully recover the time-monotonicity result of \cite{EKV}, because convexity of the relevant payoff functions has not been established. Second, if $T(u)=\ind_{(\theta_0,\infty)}(u)$, define $\sigma(t,\pi):=\Cov_{\mu_{t,\pi}}(\ind_{(\theta_0,\infty)}(\Theta),\Theta)$. Then $\sigma(t,\pi)\geq\sigma(s,\pi)$ for $t<s$, recovering Corollary~3.10 of \cite{EV}. Together with Theorem~\ref{thm_convex}, this can be used, under the assumptions of the relevant stopping problem, to establish monotonicity of the value function and the stopping boundaries.

\begin{example}\label{eg1_BM}
Consider a Brownian motion with unknown drift, $Y_t=\Theta t+W_t$. Fix a time window $h>0$ and a threshold $a>0$. At time $t$, consider forecasting the probability that the observed process rises by at least $a$ during the next interval of length $h$:
\[\Pi_t:=\P\left(\sup_{0\leq r\leq h}(Y_{t+r}-Y_t)\geq a\vert\mathcal F_t^Y\right).\]
Conditional on $\Theta=u$, the future increments satisfy $Y_{t+r}-Y_t\stackrel{d}{=}ur+W_r$ for $0\leq r\leq h$. Define $T(u):=\P\left(\sup_{0\leq r\leq h}(ur+W_r)\geq a\right)$. Then $\Pi_t=\E[T(\Theta)\vert\mathcal F_t^Y]$. To see that $T$ is non-decreasing, observe that if $u_1\leq u_2$, then $u_1r+W_r\leq u_2r+W_r$ for every $r\geq0$ and every sample path. Therefore,
\[
\left\{\sup_{0\leq r\leq h}(u_1r+W_r)\geq a\right\}
\subseteq
\left\{\sup_{0\leq r\leq h}(u_2r+W_r)\geq a\right\}.
\]
In fact, the reflection principle gives
\[T(u)=\Phi\left(u\sqrt h-\frac{a}{\sqrt h}\right)+\e^{2au}\Phi\left(-u\sqrt h-\frac{a}{\sqrt h}\right).\]
Moreover, $\Pi$ satisfies $\ud\Pi_t=\sigma(t,\Pi_t) \ud\hat W_t$, where $\sigma(t,z)=\Cov_{\mu_{t,z}}(T(\Theta),\Theta)$ and $\sigma(t,z)\geq\sigma(s,z)$ for $t\leq s$. Thus, if two forecasts currently assign the same crossing probability $z$, the forecast based on more accumulated observations has a less dispersed subsequent update.
\end{example}

\bibliographystyle{amsplain}
\bibliography{references}
\end{document}